\documentclass[a4paper,11pt]{elsarticle}

\usepackage{amsfonts}
\usepackage{amsthm}
\usepackage{amssymb, amsmath}
\usepackage[english]{babel}

\newtheorem{theo}{Theorem}[section]

\newtheorem{lemma}[theo]{Lemma}

\theoremstyle{definition} 

\newtheorem{remark}[theo]{Remark}
\newtheorem{definition}[theo]{Definition}
\usepackage{enumitem}
\newcommand{\p}{\vspace{0.3cm}\\}

\newcommand{\R}{\mathbb{R}}

\newcommand{\N}{\mathbb{N}}

\newcommand{\eps}{\varepsilon}
\newcommand{\ph}{\varphi}

\newcommand{\err}{\mathrm{err}}

\newcommand{\sgn}{\mathrm{sign}}

\numberwithin{equation}{section}
\DeclareMathOperator*\argmin{arg\,min}

\bibliographystyle{alpha}

\begin{document}
\title{On some aspects of approximation of ridge functions}
\author[tu]{Anton Kolleck}
\ead{kolleck@math.tu-berlin.de}
\author[tu]{Jan Vyb\'\i ral\corref{cor1}}
\ead{vybiral@math.tu-berlin.de}

\cortext[cor1]{Corresponding author}
\address[tu]{Mathematical Institute, Technical University Berlin, Strasse des 17. Juni 136, D-10623 Berlin, Germany}

\begin{abstract} We present effective algorithms for uniform approximation of multivariate functions satisfying some prescribed inner structure.
We extend in several directions the analysis of recovery of ridge functions $f(x)=g(\langle a,x\rangle)$ as performed earlier by one of the authors and his coauthors.
We consider ridge functions defined on the unit cube $[-1,1]^d$ as well as recovery of ridge functions defined on the unit ball from noisy measurements.
We conclude with the study of functions of the type $f(x)=g(\|a-x\|_{l_2^d}^2)$.
\end{abstract}

\begin{keyword}
Ridge functions; High-dimensional function approximation; Noisy measurements; Compressed Sensing; Dantzig selector
\MSC[2010] 65D15, 41A25
\end{keyword}

\maketitle

\section{Introduction}

Functions depending on a large number of variables play nowadays a crucial role in many areas, including
parametric and stochastic PDE's, bioinformatics, financial mathematics, data analysis and learning theory.
Together with an extensive computational power being used in these applications, results on basic numerical aspects
of these functions became crucial. Unfortunately, multivariate problems suffer often from the \emph{curse of dimension},
i.e. the minimal number of operations necessary to achieve (an approximation of) a solution grows exponentially
with the underlying dimension of the problem. Although this effect was observed many times in the literature,
we refer to \cite{NW_2009_2} for probably the most impressive result of this kind - namely that even the uniform approximation
of infinitely-differentiable functions is intractable in high dimensions.

In the area of \emph{Information Based Complexity} it was possible to achieve a number of positive results on tractability
of multivariate problems by posing an additional (structural) assumption on the functions under study. The best studied concepts
in this area include tensor product constructions and different concepts of anisotropy and weights. We refer to the series of monographs
\cite{NW1,NW2,NW3} for an extensive treatment of these and related problems. 
We pursue the direction initiated by Cohen, Daubechies, DeVore, Kerkyacharian and Picard in \cite{paper1} and further developed
in a series of recent papers \cite{paper3, HC, MUV}.  This line of study is devoted to \emph{ridge functions}, which
are multivariate function $f$ taking the form $f(x)=g(\langle a,x\rangle)$ for some univariate function $g$ and a non-zero vector $a\in\R^d.$ We refer also to \cite{DPW, SV, WW} for a related approach.

Functions of this type are by no means new in mathematics. 

They appear for example very often in statistics in the frame of the so-called \emph{single index models}.
They play also an important role in approximation theory, where their simple structure
motivated the question if a general function could be well approximated by sums of ridge functions. 
The pioneering work in this field is \cite{LS}, where the term ``ridge function'' was first introduced, and also \cite{LP}, where the fundamentality of ridge functions was investigated.
Ridge functions appeared also in mathematical analysis of neural networks \cite{C1,P2}
and as the basic building blocks of \emph{ridgelets} of Cand\`es and Donoho \cite{CD}.
A survey on approximation by (sums of) ridge functions was given in \cite{P1}.

The biggest difference between our setting and the usual approach of statistical learning
and data analysis is that we suppose that the sampling points of $f$ can be freely chosen,
and are not given in advance. This happens, for instance, if sampling of the unknown function at a point is
realized by a (costly) PDE solver.

Most of the techniques applied so far in recovery of ridge functions are based on the simple formula
\begin{equation}\label{eq:der}
\nabla f(x)=g'(\langle a,x\rangle)\cdot a.
\end{equation}
One way, how to use \eqref{eq:der} is to approximate the gradient of $f$ at a point with non-vanishing $g'(\langle a,x\rangle)$. By \eqref{eq:der}, it is then co-linear with $a$.
Once $a$ is recovered, one can use any one-dimensional sampling method to approximate $g$.

Another way to approximate $a$ is inspired by the technique of \emph{compressed sensing} \cite{CT, D}.
Taking directional derivatives of $f$ at $x$ results into
\[
\frac{\partial f(x)}{\partial \varphi}=\langle \nabla f(x),\varphi \rangle=g'(\langle a,x\rangle)\langle a,\varphi\rangle,
\]
i.e. it gives an access to the scalar product of $a$ with a chosen vector $\varphi$. If we assume, that most of the coordinates of $a$ are zero (or at least very small)
and choose the directions $\varphi_1,\dots,\varphi_m$ at random, one can recover $a$ effectively by the algorithms of sparse recovery.

Our aim is to fill some gaps left so far in the analysis done in \cite{paper3}. Although the possibility of 
extending the analysis also to functions defined on other domains than the unit ball was mentioned already in \cite{paper3}, no steps in this direction were done there.
We study in detail ridge functions
defined on the unit cube $[-1,1]^d$. The crucial component of our analysis is the use of the sign of a vector $\sgn (x)$,
which is defined componentwise. Although the mapping $x\to \sgn(x)$ is obviously not continuous, the mapping (for $a\in \R^d$ fixed)
\[
x\to \langle a, \sgn(x) \rangle
\]
is continuous at $a$ (and takes the value $\|a\|_{l_1^d}$ there). This observation allows to imitate the approach of \cite{paper3}
and to adapt it to this setting. Let us remark, that all our approximation schemes recover first an approximation of the vector $a\in\R^d$. Afterwards,
the problem becomes essentially one-dimensional and a good approximation of $f$ by a limited number of sampling points can then be recovered by
many classical methods, i.e. by spline approximation. We will therefore concentrate on an effective recovery of an approximation of $a$ and the approximation of $f$ will be given only implicitly.

Another topic only briefly discussed in \cite{paper3} was the recovery of ridge functions from noisy measurements, which is an important step for every
possible application of the methods so far. Furthermore, our analysis as well as the approach of \cite{paper3} or even the classical results of \cite{BP}
are based on approximation of first (or higher) order derivatives by differences, which poses naturally the question on numerical stability of the presented algorithms.
We present an algorithm based on the Dantzig selector of \cite{paper6}, which allows for recovery of a ridge function also in this setting. It turns out, that in the case of a small
step size $h>0$, the first order differences can not be evaluated with high enough precision. On the other hand, for a large step size $h$ the first order differences do not approximate
the first order derivatives well enough. Typically, there is therefore an $h>0$, for which an optimal degree of approximation is achieved.

Next thing we discuss is the robustness of the methods developed. We show that (without much additional effort) it can be applied also for uniform recovery of translated radial functions
$f(x)=g(\|a-x\|^2_{l_2^d})$, which are constant along co-centered spheres instead of parallel hyperplanes. Similarly to the model of ridge functions, both the center $a\in\R^d$ and the
univariate function $g$ are unknown.

Finally, we close the paper with few numerical simulations of the algorithms presented. They highlight the surprising fact, that their accuracy
\emph{improves} with increasing dimension. This is essentially based on the use of \emph{concentration of measure} phenomenon in the underlying theory and
goes in line with similar observations made in the area of compressed sensing.

The paper is structured as follows. Section 2 collects some necessary notation and certain basic facts on sparse recovery from the area of \emph{compressed sensing}.
Section 3 extends the analysis of \cite{paper3} to the setting of ridge functions defined on the unit cube. Section 4 treats the recovery of ridge functions defined on the unit ball
from noisy measurements. Section 5 studies the translated radial functions $f(x)=g(\|a-x\|_{l_2^d}^2)$ and Section 6 closes with numerical examples.

\section{Preliminaries}

In this section we collect some notation and give an overview of results from the area of compressed sensing, which we shall need later on.

\subsection{Notation}

For a given vector $x\in\R^d$ and $0\le p\le \infty$ we define
\begin{align*}
    \|x\|_{l_p^d}&:=\begin{cases}\Bigl(\sum\limits_{i=1}^d\vert x_i\vert^p\Bigr)^{\frac1p} & \text{if}\ 0<p<\infty,\\
    \#\{i\mid x_i\neq0\} & \text{if}\ p=0,\\    
    \max\limits_{i=i,\ldots,d}\vert x_i\vert & \text{if}\ p=\infty,\\
    \end{cases}
\end{align*}
where $\#A$ denotes the cardinality of the set $A$. 

This notation is further complemented by putting for $0<p<\infty$
\[
        \|x\|_{l_{p,\infty}^d}:=\max\limits_{k=1,\ldots,d}k^{\frac1p} x_{(k)},
\]
where $x_{(k)}$, $k=1,\ldots,d$ denotes the non-increasing rearrangement of the absolute entries of $x$, i.e.
$x_{(1)}\geq x_{(2)}\geq\ldots\geq x_{(d)}\geq0$ and $x_{(j)}=|x_{\sigma(j)}|$ for some permutation $\sigma:\{1,\dots,d\}\to\{1,\dots,d\}$ and all $j=1,\dots,d.$

It is a very well known fact, that $\|\cdot\|_{\ell_p^d}$ is a norm for $1\le p \le \infty$ and a quasi-norm if $0<p\le 1.$ Also $\|\cdot\|_{\ell_{p,\infty}^d}$ is a quasi-norm for every $0<p<\infty$.
If $p=2$, the space $\ell_2^d$
is a Hilbert space with the usual inner product given by
\begin{align*}
        \langle x,y\rangle=x^Ty=\sum\limits_{i=1}^dx_iy_i,~x,y\in\R^d.
\end{align*}
 
If $1\le s \le d$ is a natural number, then a vector $x\in\R^d$ is called \emph{$s$-sparse} if it contains at most $s$ nonzero entries, i.e. $\|x\|_{l_0^d}\leq s$. The set of all $s$-sparse vectors is denoted by
\begin{align*}
\Sigma_s^d:=\{x\in\R^d\mid\|x\|_{l_0^d}\leq s\}.
\end{align*}
Finally, the best $s$-term approximation of a vector $x$ describes, how well can $x$ be approximated by $s$-sparse vectors.
\begin{definition}
	The \emph{best $s$-term approximation} of a given vector $x\in\R^d$ with respect to the $l_1^d$-norm is given by
	\begin{align*}
		\sigma_s(x)_{1}:=\min\limits_{z\in\Sigma^d_s}\|x-z\|_{l_1^d}.
	\end{align*}
\end{definition}

\subsection{Results from compressed sensing}

Next we recall some basic concepts and results from compressed sensing which we will use later. Compressed sensing emerged
in \cite{CT, CRT, D} as a method of recovery of sparse vectors $x$ from a small set of linear measurements $y=\Phi x.$
Since then, a vast literature on the subject appeared, concentrating on various aspects of the theory, and its applications.
As it is not our aim to develop the theory of compressed sensing, but rather to use it in approximation theory, we shall restrict
ourselves to the most important facts needed later on. We refer to \cite{BCKV, DDEK, FR2, paper7} for recent overviews of the field and more references.

We focus on the recovery of vectors from noisy measurements, i.e. we want to recover the vectors $x\in\R^d$ from $m<d$ linear measurements of the form
\begin{align}\label{compressed-sensing-setting}
	y=\Phi x+e+z,
\end{align}
where $\Phi\in\R^{m\times d}$ is the measurement matrix and the noise is a composition of two factors, namely of the deterministic noise $e\in\R^m$ and the random noise $z\in\R^m$.
Typically, we will assume, that $e$ is small (with respect to some $\ell_p^m$ norm) and that the components of $z$ are generated
independently according to a Gaussian distribution with small variance.

Obviously, some conditions have to be posed on $\Phi$, so that the recovery of $x$ from the measurements $y$ given by \eqref{compressed-sensing-setting} is possible.
The most usual one in the theory of compressed sensing is that the matrix $\Phi$ satisfies the \emph{restricted isometry property}. 

\begin{definition}
	The matrix $\Phi\in\R^{m\times d}$ satisfies the \emph{restricted isometry property} (RIP) of order $s\le d$ if there exists a constant $0<\delta<1$
	such that
	\begin{align*}
		(1-\delta)\|x\|^2_{l_2^d}\leq\|\Phi x\|_{l_2^m}^2\leq(1+\delta)\|x\|^2_{l_2^d}
	\end{align*}
	holds for all $s$-sparse vectors $x\in\Sigma_s^d$. The smallest constant $\delta$ for which this inequality holds is called the
	restricted isometry constant and we will denote it by $\delta_s$.
\end{definition}

In general it is very hard to show that a given matrix satisfies this RIP or not.
This is in particular the main reason why we will use random matrices, since it turns out that those matrices satisfy the RIP with overwhelming high probability.
We present a version of such a statement, which comes from \cite{paper2}.
\begin{theo}\label{rip}
	For every $0<\delta<1$ there exist constants $C_1,C_2>0$ depending on $\delta$ such that the random matrix $\Phi\in\R^{m\times d}$ with entries generated independently as
	\begin{align}\label{eq:Bern}
		\ph_{ij}=\frac{1}{\sqrt{m}}\begin{cases}+1~\text{with probability}~1/2,\\-1~\text{with probability}~1/2\end{cases}
	\end{align}
	satisfies the RIP of order $s$ for each $s\leq(C_2 m)/\log(d/m)$ with RIP constant $\delta_{s}\leq\delta$ with probability at least
	\begin{align*}
		1-2e^{-C_1 m}.
	\end{align*}
\end{theo}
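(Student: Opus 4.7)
The plan is to prove Theorem~\ref{rip} by the classical three-step route that combines a concentration inequality, a covering argument on each coordinate subspace, and a union bound over supports.

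\textbf{Step 1 (concentration for a fixed vector).} First I would fix $x\in\R^d$ and analyze the scalar random variable $\|\Phi x\|_{l_2^m}^2$. Because the entries $\ph_{ij}$ in \eqref{eq:Bern} are independent Rademacher variables scaled by $1/\sqrt{m}$, each coordinate $(\Phi x)_i$ is a sub-Gaussian random variable of mean zero and variance $\|x\|_{l_2^d}^2/m$, and the $m$ coordinates are independent. A Hoeffding-type bound for sums of squares of sub-Gaussian variables then yields
\[
\Pr\bigl(\bigl|\|\Phi x\|_{l_2^m}^2-\|x\|_{l_2^d}^2\bigr|>t\|x\|_{l_2^d}^2\bigr)\le 2\exp(-c_0 t^2 m),\qquad t\in(0,1),
\]
for some absolute constant $c_0>0$. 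This is the only place where the distributional assumption on $\Phi$ enters.

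\textbf{Step 2 (covering a single coordinate subspace).} Next I would fix a support $T\subset\{1,\dots,d\}$ of cardinality $s$ and cover the unit sphere $S_T=\{x\in\R^d:\mathrm{supp}(x)\subseteq T,\ \|x\|_{l_2^d}=1\}$ by an $\eta$-net $\mathcal{N}_T$ with $|\mathcal{N}_T|\le(1+2/\eta)^s$, a standard volumetric estimate. Applying the concentration bound from Step~1 at every point of $\mathcal{N}_T$ with $t=\delta/2$ and taking the union bound gives a failure probability on the net of at most $2(1+2/\eta)^s\exp(-c_0\delta^2 m/4)$. A short triangle/approximation argument then promotes control on the net to control on all of $S_T$: expanding around the closest net point, one chooses $\eta$ small enough (e.g.\ $\eta=\delta/16$) so that the worst-case approximation error is absorbed into $\delta$, delivering $(1-\delta)\|x\|_{l_2^d}^2\le\|\Phi x\|_{l_2^m}^2\le(1+\delta)\|x\|_{l_2^d}^2$ uniformly on $S_T$.

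\textbf{Step 3 (union bound over supports).} Finally I would union-bound over the $\binom{d}{s}\le(ed/s)^s$ choices of $T$. The total failure probability is at most
\[
2\binom{d}{s}\left(1+\tfrac{2}{\eta}\right)^s\exp(-c_0\delta^2 m/4)\le 2\exp\bigl(s\log(C(\delta)\,d/s)-c_0\delta^2 m/4\bigr).
\]
Using that $\log(d/s)\le\log(d/m)+\log(m/s)$ together with the hypothesis $s\le C_2 m/\log(d/m)$, for $C_2=C_2(\delta)$ small enough the combinatorial term $s\log(C(\delta)d/s)$ is dominated by $c_0\delta^2 m/8$, and the total bound reduces to $2\exp(-C_1 m)$ with $C_1=C_1(\delta)>0$.

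The main obstacle is the calibration in Step~3: one has to ensure that $s\log(d/s)$ is genuinely of the same order as $s\log(d/m)$ throughout the admissible range $s\le C_2 m/\log(d/m)$, so that the combinatorial cost $\binom{d}{s}$ can actually be absorbed into the exponential concentration gain of Step~1 after the correct choice of $\eta$. The sub-Gaussian tail estimate in Step~1 is the workhorse; once one has it, the covering and union-bound steps are routine but require honest bookkeeping of the $\delta$-dependencies in $C_1$ and $C_2$.
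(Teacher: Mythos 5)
Your proposal is correct and is essentially the argument behind the quoted result: the paper itself gives no proof of this theorem, importing it from \cite{paper2}, whose ``simple proof'' is exactly your three-step scheme of pointwise concentration of $\|\Phi x\|_{l_2^m}^2$ for fixed $x$, an $\eta$-net on the unit sphere of each $s$-dimensional coordinate subspace, and a union bound over the $\binom{d}{s}$ supports, with the same calibration of $C_1,C_2$ in terms of $\delta$. Two cosmetic remarks only: the tail bound in your Step 1 is of Bernstein/sub-exponential rather than Hoeffding type (squares of sub-Gaussians are sub-exponential, though for $t\in(0,1)$ one indeed gets $2e^{-c_0t^2m}$, e.g.\ by the direct moment-generating-function computation for Bernoulli entries), and your bookkeeping $s\log(d/s)\le s\log(d/m)+s\log(m/s)\le C_2m+m/e$ tacitly uses $s\le m$, which is harmless since the RIP of order $s>m$ cannot hold for any $m\times d$ matrix.
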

A matrix $\Phi$ generated by \eqref{eq:Bern} is called \emph{normalized Bernoulli matrix}. 
For the sake of simplicity, we work with Bernoulli sensing matrices, but note
that most of the statements presented below remain true for other classes of random matrices, c.f. \cite[Section 5]{paper5}.

Next we present several recovery results for our starting problem \eqref{compressed-sensing-setting}.
The first result of this kind deals with the case of  exact measurements (i.e. $e=z=0$) and uses the so called $l_1^d$-minimizer, cf. \cite[Theorem 4.3]{paper4}.
\begin{theo}\label{RIP-reconstruction}
Let $\Phi\in\R^{m\times d}$ satisfy the RIP of order $2s$ with constant $\delta_{2s}\leq\delta<1/3$.
Let $x\in\R^d$ and let us denote $y=\Phi x$. Finally, let $\Delta_{l_1^d}(y)\in\R^d$ be the solution of the minimization problem
\begin{align}\label{eq:l1}
\min\limits_{w\in\R^d}\|w\|_{l_1^d}\quad \text{subject to}\quad \Phi w=y.
\end{align}
Then it holds
\begin{align*}
\|x-\Delta_{l_1^d}(y)\|_{l_1^d}=\|x-\Delta_{l_1^d}(\Phi x)\|_{l_1^d}\leq C_0 \sigma_s^d(x)_{1}
\end{align*}
with constant $C_0$ depending only on $\delta$. 
\end{theo}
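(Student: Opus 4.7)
The plan is to follow the standard Candès--Romberg--Tao analysis of the $\ell_1^d$-minimizer under RIP, tailored to produce an $\ell_1^d$-to-$\ell_1^d$ bound rather than the more common $\ell_2^d$-to-$\ell_1^d$ one. Set $h := \Delta_{l_1^d}(y) - x$; since $\Phi x = \Phi \Delta_{l_1^d}(y) = y$, the error vector satisfies $\Phi h = 0$. Let $T_0 \subset \{1,\dots,d\}$ denote the set of indices of the $s$ largest (in absolute value) entries of $x$, so that $\|x_{T_0^c}\|_{l_1^d} = \sigma_s(x)_1$. Then partition the complement $T_0^c$ into consecutive blocks $T_1, T_2, \dots$ of size $s$ (the last possibly smaller), sorted by decreasing magnitude of $h$.

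First I would extract the ``cone condition''. Feasibility of $x$ for \eqref{eq:l1} and optimality of $\Delta_{l_1^d}(y)$ give $\|x + h\|_{l_1^d} \le \|x\|_{l_1^d}$. Splitting both sides along $T_0$ and $T_0^c$, applying the triangle inequality on $T_0$ and the reverse triangle inequality on $T_0^c$, one gets
\[
\|h_{T_0^c}\|_{l_1^d} \;\le\; \|h_{T_0}\|_{l_1^d} + 2\sigma_s(x)_1.
\]
This is the standard estimate, so that $h$ lies (up to the $\sigma_s$ slack) in a descent cone localized on $T_0$.

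Next I would exploit $\Phi h = 0$ through RIP. Writing $\Phi h_{T_0 \cup T_1} = -\sum_{j\ge 2} \Phi h_{T_j}$ and using the polarization-type consequence of RIP, namely $|\langle \Phi u, \Phi v\rangle| \le \delta_{2s}\|u\|_{l_2^d}\|v\|_{l_2^d}$ for $s$-sparse vectors $u,v$ with disjoint supports, one obtains
\[
(1-\delta_{2s})\|h_{T_0 \cup T_1}\|_{l_2^d} \;\le\; \sqrt{2}\,\delta_{2s}\sum_{j\ge 2}\|h_{T_j}\|_{l_2^d}.
\]
The block-sorting of $h$ on $T_0^c$ then yields the routine inequality $\sum_{j\ge 2}\|h_{T_j}\|_{l_2^d} \le s^{-1/2}\|h_{T_0^c}\|_{l_1^d}$. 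Combining this with the cone condition and the Cauchy--Schwarz bound $\|h_{T_0}\|_{l_1^d} \le \sqrt{s}\,\|h_{T_0 \cup T_1}\|_{l_2^d}$ produces
\[
\sqrt{s}\,\|h_{T_0 \cup T_1}\|_{l_2^d} \;\le\; \frac{\sqrt{2}\,\delta_{2s}}{1-\delta_{2s}}\bigl(\sqrt{s}\,\|h_{T_0 \cup T_1}\|_{l_2^d} + 2\sigma_s(x)_1\bigr).
\]
The hypothesis $\delta < 1/3$ is well below the threshold $\sqrt{2}-1$ at which $\sqrt{2}\delta/(1-\delta) < 1$, so one can absorb the $\|h_{T_0\cup T_1}\|_{l_2^d}$ term on the right and solve for it, obtaining $\sqrt{s}\,\|h_{T_0\cup T_1}\|_{l_2^d} \le C\,\sigma_s(x)_1$ with $C = C(\delta)$.

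Finally I would assemble the full $\ell_1^d$ estimate: the cone condition gives $\|h\|_{l_1^d} \le 2\|h_{T_0}\|_{l_1^d} + 2\sigma_s(x)_1 \le 2\sqrt{s}\,\|h_{T_0\cup T_1}\|_{l_2^d} + 2\sigma_s(x)_1 \le C_0\,\sigma_s(x)_1$, which is the claim. The only delicate point is the polarization-type RIP inequality for pairs of disjointly supported $s$-sparse vectors and the constant bookkeeping that turns $\delta_{2s}<1/3$ into a usable contraction; both are routine, so I expect no substantial obstacle beyond tracking constants carefully.
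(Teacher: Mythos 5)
Your argument is correct. Note first that the paper does not prove this statement at all: it is quoted directly from the literature (Theorem 4.3 of Cohen--Dahmen--DeVore, reference [paper4] in the bibliography), so there is no in-paper proof to match. Your self-contained derivation is the standard Cand\`es-style RIP analysis: the cone condition $\|h_{T_0^c}\|_{l_1^d}\le\|h_{T_0}\|_{l_1^d}+2\sigma_s(x)_1$ from optimality, the block decomposition of $T_0^c$ by decreasing magnitude of $h$, the polarization bound $|\langle\Phi u,\Phi v\rangle|\le\delta_{2s}\|u\|_{l_2^d}\|v\|_{l_2^d}$ for disjointly supported $s$-sparse vectors (which indeed only needs RIP of order $2s$), and the absorption step, which works because $\delta<1/3$ gives $\sqrt{2}\,\delta/(1-\delta)\le\sqrt{2}/2<1$; the final assembly $\|h\|_{l_1^d}\le 2\|h_{T_0}\|_{l_1^d}+2\sigma_s(x)_1\le 2\sqrt{s}\,\|h_{T_0\cup T_1}\|_{l_2^d}+2\sigma_s(x)_1$ then yields $C_0=C_0(\delta)$. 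The cited source instead proceeds through the null space property: it characterizes $\ell_1$-instance optimality by a null space condition and then derives that condition from RIP, which is conceptually cleaner (the NSP is both necessary and sufficient) but uses essentially the same block-decomposition machinery in the RIP-to-NSP step; your direct route gives the same conclusion with explicit constant bookkeeping and nothing in it breaks. Two minor points worth stating explicitly if you write this up: the last block $T_j$ may have fewer than $s$ elements (harmless, since RIP applies to supports of size at most $s$), and the argument applies verbatim to any minimizer of \eqref{eq:l1} if it is not unique.
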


This theorem implies that $s$-sparse vectors are recovered exactly by the $l_1^d$-minimizer \eqref{eq:l1} in the noise-free setting, since $\sigma_s^d(x)_{1}=0$ holds for every $x\in\Sigma_s^d$.
To deal with the deterministic noise $e$, we shall need some more information about the geometrical properties of Bernoulli matrices. In particular, we will make use of Theorem 3.5 and Theorem 4.1 of \cite{paper5},
cf. also \cite{LPRTJ}.

\begin{theo}\label{surjective-deterministic-noise}
Let $\Phi\in\R^{m\times d}$ be a normalized Bernoulli matrix and let $d\geq (\log 6)^2m$.
Let $U_J= \{y\in\R^m: \|y\|_J\le 1\}$, where
\[
\|y\|_J=\max\left\{\sqrt{m}\|y\|_{l_\infty^m}; \sqrt{\frac{m}{\log(d/m)}}\|y\|_{l_2^m}\right\}.
\]
\begin{enumerate} \item[(i)] Then there exists an absolute constant $C_3>0$ such that with probability at least $1-e^{-\sqrt{dm}}$ for every $y\in U_J$ there is an $x\in\R^d$, such that $\Phi x=y$
and $\|x\|_{l_1^d}\le C_3$.
\item[(ii)] Let $\delta>0$ and let $C_1$ and $C_2$ be the constants from Theorem \ref{rip}. 
Then there exists an absolute constant $C_3$ and a constant $C_4$ depending on $\delta$ such that, with probability at least $1-2e^{-C_1 m}-e^{-\sqrt{md}}$, for each $y\in U_J$ 
there exists a vector $x\in\R^d$ with $\Phi x=y$, $\|x\|_{l_1^d}\leq C_3$ and $\|x\|_{l_2^d}\leq C_4\sqrt{\log(d/m)/m}$. 
\end{enumerate}
\end{theo}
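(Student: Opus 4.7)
The strategy is to deduce the theorem from the geometric results of \cite{paper5} by verifying that the body $U_J$ and the normalized Bernoulli matrix of the statement match the hypotheses there. The norm $\|\cdot\|_J$ is precisely the one whose unit ball plays the role of a ``polar body'' of the image of the $l_1^d$-ball under a sufficiently generic $m\times d$ matrix: it is the maximum of $\sqrt{m}$ times the $l_\infty^m$-norm and $\sqrt{m/\log(d/m)}$ times the $l_2^m$-norm, reflecting the two natural scales (sparsity and diffusion) at which a random projection acts.

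For part (i), the plan is to apply \cite[Theorem 3.5]{paper5} directly. The hypothesis $d\geq (\log 6)^2 m$ enters there to guarantee the validity of the underlying entropy estimates; the conclusion is that, outside an exceptional set of probability at most $e^{-\sqrt{dm}}$, the image of the $l_1^d$-ball of radius $C_3$ under $\Phi$ contains $U_J$, which is exactly the statement to be proved. This is a quantitative version of the classical Kashin-type construction, due in the form we need to Litvak, Pajor, Rudelson and Tomczak-Jaegermann \cite{LPRTJ}, and ultimately rests on combining Gaussian concentration with a union bound over an $\varepsilon$-net of the dual body.

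For part (ii), the additional $l_2^d$-control is obtained by combining part (i) with the RIP of Theorem \ref{rip}, following the peeling scheme of \cite[Theorem 4.1]{paper5}. Given $y\in U_J$, one constructs a preimage iteratively: at step $k$ one applies part (i) to the current residual $y^{(k)}$ to extract an $l_1^d$-bounded preimage $x^{(k)}$, splits $x^{(k)}$ into its best $s$-term approximation $\tilde x^{(k)}$ (with $s$ of the order $m/\log(d/m)$ as permitted by Theorem \ref{rip}) and a tail, uses RIP to bound $\|\tilde x^{(k)}\|_{l_2^d}$ by $\|x^{(k)}\|_{l_1^d}/\sqrt{s}$, and sets $y^{(k+1)}:=y^{(k)}-\Phi\tilde x^{(k)}$. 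The sum $x:=\sum_k \tilde x^{(k)}$ then satisfies $\Phi x=y$, and summing the resulting geometric series yields simultaneously $\|x\|_{l_1^d}\leq C_3$ and $\|x\|_{l_2^d}\leq C_4\sqrt{\log(d/m)/m}$. The failure probability is controlled by a union bound over the RIP failure event (contributing $2e^{-C_1 m}$) and the failure event of part (i) (contributing $e^{-\sqrt{md}}$).

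The main obstacle, in my view, is the geometric contraction step in the peeling scheme — concretely, showing that $\|y^{(k+1)}\|_J\leq \tfrac12 \|y^{(k)}\|_J$ (or any fixed fraction) uniformly over $y\in U_J$. This is where the RIP estimate and the dual description of $U_J$ in terms of the two scales $\sqrt{m}\|\cdot\|_{l_\infty^m}$ and $\sqrt{m/\log(d/m)}\|\cdot\|_{l_2^m}$ have to interact delicately: the first factor controls the $l_\infty^m$-component of the residual through coordinate-wise estimates, while the second factor is handled through the RIP-induced near-isometry on sparse vectors. Once this contraction is established, the remainder of the argument is routine bookkeeping of the geometric series, and the statement of the theorem follows with the probabilities and constants as claimed.
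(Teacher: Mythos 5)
The paper itself offers no proof of this statement: it is quoted verbatim from Theorems 3.5 and 4.1 of \cite{paper5} (cf.\ also \cite{LPRTJ}), so for part (i) your direct appeal to \cite[Theorem 3.5]{paper5} is exactly the justification the paper intends. The problem lies in your reconstruction of part (ii). The central estimate of your peeling scheme, namely that RIP bounds $\|\tilde x^{(k)}\|_{l_2^d}$ by $\|x^{(k)}\|_{l_1^d}/\sqrt{s}$ for the $s$-term \emph{head} $\tilde x^{(k)}$, is false: for an $s$-sparse vector $v$ Cauchy--Schwarz gives $\|v\|_{l_1^d}\le\sqrt{s}\,\|v\|_{l_2^d}$, i.e.\ the inequality points the wrong way, and RIP (which compares $\|\Phi v\|_{l_2^m}$ with $\|v\|_{l_2^d}$) cannot reverse it; Stechkin's inequality $\|x-x_S\|_{l_2^d}\le\|x\|_{l_1^d}/\sqrt{s}$ controls the \emph{tail}, not the head. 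Concretely, if the preimage from part (i) happens to be $1$-sparse with $\|x^{(1)}\|_{l_1^d}=C_3$, your algorithm terminates after one step and outputs a vector of $l_2^d$-norm $C_3$, not $C_4\sqrt{\log(d/m)/m}$. The contraction $\|y^{(k+1)}\|_J\le\tfrac12\|y^{(k)}\|_J$, which you correctly flag as the main obstacle, is also not available from your split: since $y^{(k+1)}=\Phi(x^{(k)}-\tilde x^{(k)})$, one only gets $\sqrt{m}\,\|y^{(k+1)}\|_{l_\infty^m}\le\|x^{(k)}-\tilde x^{(k)}\|_{l_1^d}\le C_3$ and similarly for the $l_2^m$-component, i.e.\ a bound of order $C_3$ (an absolute constant that need not be $<1$), so the geometric series never closes. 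Note also that your scheme never uses the $l_2^m$-component of the $J$-norm of $y$, which is in fact indispensable for the $l_2^d$-bound.

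A correct route (in the spirit of \cite[Theorem 4.1]{paper5}) is one-shot rather than iterative: take the $l_1^d$-bounded preimage $x$ from part (i) (or the $l_1^d$-minimal preimage, which then also satisfies $\|x\|_{l_1^d}\le C_3$), let $S$ be the set of its $s$ largest entries with $s\asymp m/\log(d/m)$, and estimate the two pieces separately. The tail satisfies $\|x_{S^c}\|_{l_2^d}\le\|x\|_{l_1^d}/\sqrt{s}$ by Stechkin. For the head, use $\Phi x_S=y-\Phi x_{S^c}$ together with RIP: $\|x_S\|_{l_2^d}\le(1-\delta)^{-1/2}\bigl(\|y\|_{l_2^m}+\|\Phi x_{S^c}\|_{l_2^m}\bigr)$, where $\|y\|_{l_2^m}\le\sqrt{\log(d/m)/m}$ because $y\in U_J$, and $\|\Phi x_{S^c}\|_{l_2^m}\lesssim\|x\|_{l_1^d}/\sqrt{s}$ follows by partitioning $S^c$ into blocks of size $s$ in decreasing order of magnitude and applying RIP blockwise. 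Since $1/\sqrt{s}\asymp\sqrt{\log(d/m)/m}$, both pieces are of the required size, and the probability bound is the union bound over the failure events of Theorem \ref{rip} and of part (i), exactly as you state.
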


We will use those two theorems to handle the deterministic noise $e$.
Further we need a similar result to handle the random noise $z$, therefore we recall the \emph{Dantzig selector} from \cite{paper6}.

\begin{definition}[Dantzig selector]
	For a matrix $\Phi\in\R^{m\times d}$ and constants $\lambda_d,\sigma>0$ the \emph{Dantzig selector} $\Delta_{DS}(y)\in\R^d$ of an input vector $y\in\R^m$
	is defined as the solution of the minimization problem
	\begin{align}\label{eq:Dantz}
		\min\limits_{w\in\R^d} \|w\|_{l_1^d}\quad \text{subject to}\quad \|\Phi^T (y-\Phi w)\|_{l_\infty^d}\leq\lambda_d\sigma.
	\end{align}
\end{definition}

\begin{remark}\label{rem:setting}
In what follows we shall use several parameters as the description of the typical frame of compressed sensing.
First, we take $m\le d$ to be natural numbers and denote by $\Phi\in\R^{m\times d}$ the normalized Bernoulli matrix \eqref{eq:Bern}.
We put $\delta:=1/6$ and denote by $C_1$ and $C_2$ the constants appearing in Theorem \ref{rip}.
Next, we assume that the natural numbers $s\le m\le d$ satisfy
\begin{equation}\label{eq:setting}
d\ge (\log 6)^2 m\quad\text{and}\quad 3s\leq(C_2 m)/\log(d/m).
\end{equation}
Hence, by Theorem \ref{rip}, $\Phi$ has (with high probability) the RIP of order $3s$ with a constant at most 1/6.
\end{remark}

Now we can use Theorem 1.3 of \cite{paper6} to handle the random noise $z$.

\begin{theo}\label{dantzig}
Let $s,m,d$ be natural numbers with \eqref{eq:setting} and let $\Phi\in\R^{m\times d}$ be a normalized Bernoulli matrix.
Let
	\begin{align*}
		y=\Phi x + z
	\end{align*}
	for $x\in\R^d$ with $\|x\|_{p,\infty}\leq R$, $0< p\leq 1$, and $z\in\R^m$ with independent entries $z_i\sim\mathcal{N}(0,\sigma^2)$.
	Then there exists a constant $C_5$ such that the Dantzig Selector 
	(with $\lambda_d=\sqrt{2\log d}$) satisfies
	\begin{align*}
		\|\Delta_{DS}(y)-x\|_{l_2^d}^2\leq\min\limits_{1\leq s_*\leq s}2C_5\log d\left(s_*\sigma^2+R^2s_*^{-2(1/p-1/2)}\right)
	\end{align*}	
	with high probability.
\end{theo}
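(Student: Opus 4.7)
The plan is to invoke Theorem 1.3 of \cite{paper6} directly and then translate its conclusion into the bound claimed here by estimating the best $s_*$-term approximation of $x$ in $l_2^d$ using the weak $l_p^d$ hypothesis $\|x\|_{p,\infty}\le R$. The verification of the hypotheses needed to apply the Cand\`es--Tao bound is handled by the setting of Remark \ref{rem:setting}: under \eqref{eq:setting}, Theorem \ref{rip} guarantees that $\Phi$ satisfies RIP of order $3s$ with constant at most $1/6$ with probability at least $1-2e^{-C_1m}$, which is the sort of RIP assumption under which the Dantzig selector analysis in \cite{paper6} is carried out.

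First, I would recall that Theorem 1.3 of \cite{paper6}, applied with the RIP on $\Phi$ just mentioned and with the choice $\lambda_d=\sqrt{2\log d}$, shows that with high probability the Dantzig selector satisfies a bound of the form
\[
\|\Delta_{DS}(y)-x\|_{l_2^d}^2 \le C_5\cdot 2\log d \cdot \min_{1\le s_*\le s}\left(s_*\sigma^2 + \|x-x_{[s_*]}\|_{l_2^d}^2\right),
\]
where $x_{[s_*]}$ denotes any best $s_*$-term approximation to $x$ in $l_2^d$ (i.e.\ the vector obtained by keeping the $s_*$ largest entries of $x$ in absolute value and zeroing out the rest). This is exactly the conclusion of \cite[Theorem 1.3]{paper6}, specialised to Gaussian noise with variance $\sigma^2$ and with the same choice of threshold parameter $\lambda_d=\sqrt{2\log d}$.

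Second, I would translate the tail term $\|x-x_{[s_*]}\|_{l_2^d}^2$ into the announced form using the weak $l_p^d$ assumption. By definition of $\|\cdot\|_{l_{p,\infty}^d}$, the non-increasing rearrangement satisfies $x_{(k)}\le R\,k^{-1/p}$ for every $k=1,\dots,d$, so that
\[
\|x-x_{[s_*]}\|_{l_2^d}^2 \;=\; \sum_{k>s_*} x_{(k)}^2 \;\le\; R^2\sum_{k>s_*} k^{-2/p}.
\]
Since $0<p\le 1$ forces $2/p\ge 2>1$, the remaining sum is bounded by a constant multiple of $s_*^{1-2/p}=s_*^{-2(1/p-1/2)}$; absorbing that constant into $C_5$ and combining with the previous display yields the stated bound
\[
\|\Delta_{DS}(y)-x\|_{l_2^d}^2 \le \min_{1\le s_*\le s} 2C_5\log d\,\bigl(s_*\sigma^2 + R^2 s_*^{-2(1/p-1/2)}\bigr).
\]

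The only real content here is the reduction to \cite[Theorem 1.3]{paper6}; the main (mild) obstacle is checking that the RIP order, the RIP constant, and the choice of $\lambda_d$ are compatible with the hypotheses of that theorem, but all three are ensured by Remark \ref{rem:setting} together with Theorem \ref{rip}. Both the weak-$l_p$ tail estimate and the final bookkeeping of constants are routine.
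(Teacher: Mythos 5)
Your proposal is correct and matches the paper's treatment: the paper gives no independent proof of this statement but simply quotes it as Theorem 1.3 of \cite{paper6}, applicable because Remark \ref{rem:setting} and Theorem \ref{rip} guarantee the needed RIP-type condition for the Bernoulli matrix. Your additional step of converting the best $s_*$-term tail of the Cand\`es--Tao bound into the form $R^2 s_*^{-2(1/p-1/2)}$ via $x_{(k)}\le R\,k^{-1/p}$ is exactly the routine specialisation to weak-$\ell_p$ vectors implicit in the paper's statement, so the approaches are essentially the same.
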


Combining Theorem \ref{surjective-deterministic-noise} and Theorem \ref{dantzig} we get the following new result.

\begin{theo}\label{random+deterministic}
Let $s,m,d$ be natural numbers with \eqref{eq:setting} and let $\Phi\in\R^{m\times d}$ be a normalized Bernoulli matrix.
	For $x\in\R^d$ and $e,z\in\R^m$ with $\|x\|_{l_{1,\infty}^d}\leq R$, $\|e\|_{l_2^d}\leq c\,\eps\sqrt{\log(d/m)}$, $\|e\|_{l_\infty^d}\leq c\,\eps$ 
	and $z_i\sim\mathcal{N}(0,\sigma^2)$ for some constants $R,\sigma,\eps,c>0$ let
	\begin{align*}
		y=\Phi x+e+z.
	\end{align*}
	Then there exist constants $C_5,C_6,C_7$ such that the Dantzig selector $\Delta_{DS}$ (with $\lambda_d=\sqrt{2\log d}$)
	applied to $y$ satisfies the estimate
	\begin{align*}
		\|\Delta_{DS}(y)-x\|_{l_2^d}\leq \left(\min\limits_{1\leq s_*\leq s}2 C_5 \log d \left(s_*\sigma^2+\tilde R^2 s_*^{-1}\right)\right)^{\frac 12} + C_7\frac{\eps\sqrt{m}}{\sqrt{s}}
	\end{align*}
	with high probability, where $\tilde R= 2(R+2C_6\eps\sqrt{m})$.
\end{theo}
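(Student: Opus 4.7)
The proof plan is to remove the deterministic noise $e$ by absorbing it into a shift of the unknown vector $x$, thereby reducing the hybrid-noise setting to the pure-Gaussian-noise setting already handled by Theorem \ref{dantzig}. Concretely, I would use Theorem \ref{surjective-deterministic-noise}(ii) to produce a preimage $x' \in \R^d$ of $e$ under $\Phi$, i.e. $\Phi x' = e$, with controlled $\ell_1^d$ and $\ell_2^d$ norms. Then $y = \Phi(x + x') + z$ is a pure-random-noise observation of the shifted vector $x + x'$, which is the input format for the Dantzig selector bound of Theorem \ref{dantzig}.

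To apply Theorem \ref{surjective-deterministic-noise}(ii), I first check that $e$, after normalization, lies in $U_J$. From the hypotheses, $\sqrt{m}\|e\|_{l_\infty^m} \le c\eps\sqrt{m}$ and $\sqrt{m/\log(d/m)}\|e\|_{l_2^m} \le c\eps\sqrt{m}$, so $\|e\|_J \le c\eps\sqrt{m}$ and hence $e/(c\eps\sqrt{m}) \in U_J$. Applying the theorem (and rescaling) yields $x' \in \R^d$ with $\Phi x' = e$, $\|x'\|_{l_1^d} \le C_3 c\eps\sqrt{m}$ and $\|x'\|_{l_2^d} \le C_4 c\eps\sqrt{\log(d/m)}$, with the probability bound stated there; this is the step whose constants dictate the shape of $\tilde R$ in the final statement.

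Next I would estimate $\|x+x'\|_{l_{1,\infty}^d}$ so that Theorem \ref{dantzig} can be applied to $x+x'$. Using the quasi-triangle inequality for $\|\cdot\|_{l_{1,\infty}^d}$ (with constant $2$) together with the trivial bound $\|x'\|_{l_{1,\infty}^d} \le \|x'\|_{l_1^d} \le C_3 c\eps\sqrt{m}$, I obtain $\|x+x'\|_{l_{1,\infty}^d} \le 2(R + C_3 c\eps\sqrt{m}) = \tilde R$ provided $C_6$ is chosen so that $2C_6 = C_3 c$. Theorem \ref{dantzig} with $p=1$ then gives, with high probability,
\[
\|\Delta_{DS}(y) - (x+x')\|_{l_2^d}^2 \le \min_{1\le s_* \le s} 2C_5 \log d\,\bigl(s_*\sigma^2 + \tilde R^2 s_*^{-1}\bigr).
\]

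Finally, the triangle inequality gives $\|\Delta_{DS}(y)-x\|_{l_2^d} \le \|\Delta_{DS}(y) - (x+x')\|_{l_2^d} + \|x'\|_{l_2^d}$, and it remains to convert the bound on $\|x'\|_{l_2^d}$ into the form $C_7\eps\sqrt{m}/\sqrt{s}$ stated in the theorem. This uses precisely the compatibility condition \eqref{eq:setting}: from $3s \le C_2 m/\log(d/m)$ we get $\sqrt{\log(d/m)} \le \sqrt{C_2/3}\sqrt{m/s}$, so $\|x'\|_{l_2^d} \le C_4 c\eps\sqrt{\log(d/m)} \le C_7\eps\sqrt{m}/\sqrt{s}$ with $C_7 := C_4 c\sqrt{C_2/3}$. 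Combining with the Dantzig bound completes the proof; the union bound over the failure events of Theorems \ref{rip}, \ref{surjective-deterministic-noise}(ii) and \ref{dantzig} preserves "high probability". The only real subtlety is the first step — seeing that the deterministic noise can be replaced by an equivalent shift of the signal rather than handled as part of the "$\Phi w - y$" residual in \eqref{eq:Dantz}; once this shift trick is in place, everything else reduces to bookkeeping with the constants.
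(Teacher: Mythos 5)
Your proposal is correct and follows essentially the same route as the paper's proof: bound $\|e\|_J\le c\,\eps\sqrt m$, use Theorem \ref{surjective-deterministic-noise}(ii) to write $e=\Phi u$ with the stated $l_1^d$ and $l_2^d$ bounds, control $\|x+u\|_{l_{1,\infty}^d}$ via the quasi-triangle inequality to get $\tilde R$, apply Theorem \ref{dantzig} with $p=1$ to $x+u$, and finish with the triangle inequality. Your explicit conversion of $\|u\|_{l_2^d}\le C_4c\,\eps\sqrt{\log(d/m)}$ into $C_7\eps\sqrt m/\sqrt s$ via \eqref{eq:setting} is exactly what the paper leaves implicit in its final line, so nothing is missing.
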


\begin{proof}
It follows from the assumptions that $\|e\|_J\le c\,\varepsilon\sqrt{m}$. Then we use Theorem \ref{surjective-deterministic-noise} (ii) to find a vector $u\in\R^d$, such that 
\begin{align*}
\Phi u&=e,\\
\|u\|_{l_1^d}&\le C_3 \|e\|_J\le C_3\,c\,\varepsilon\sqrt{m},\\
\|u\|_{l_2^d}&\le C_4\sqrt{\log(d/m)/m}\|e\|_J\le C_4\,c\,\varepsilon\sqrt{\log(d/m)}.
\end{align*}

Further we apply the triangle inequality for the $\|\cdot\|_{1,\infty}$ quasinorm (see, for instance, Lemma 2.7 in \cite{paper7}) to get
\begin{align*}
	\|x+u\|_{l_{1,\infty}^d}&\leq 2\bigl(\|x\|_{l_{1,\infty}^d}+\|u\|_{l_{1,\infty}^d}\bigr)\leq 2\bigl(\|x\|_{l_{1,\infty}^d}+\|u\|_{l_{1}^d}\bigr)\\
&\leq 2(R+C_6\eps\sqrt{m})=:\tilde R.
\end{align*}
Finally, applying Theorem \ref{dantzig} (with $p=1$) we get
\begin{align*}
	\|\Delta_{DS}(y)-x\|_{l_2^d}
	&=\|\Delta_{DS}(\Phi x + e +z)-x\|_{l_2^d}\\
	&\leq \|\Delta_{DS}(\Phi(x+u)+z)-(x+u)\|_{l_2^d}+\|u\|_{l_2^d}\\
	&\leq \left(\min\limits_{1\leq s_*\leq s}2C_5\log d\left(s_*\sigma^2+\tilde R^2s_*^{-1}\right)\right)^{\frac12}+C_7\frac{\eps\sqrt{m}}{\sqrt{s}}
\end{align*}
	which finishes the proof.
\end{proof}

\section{Approximation of ridge functions defined on cubes}\label{ridge-on-cubes}

In this section we consider uniform approximation of ridge functions of the form
\begin{align}\label{eq:ridge}
f\colon [-1,1]^d\to\R,~x\mapsto g(\langle a,x\rangle).
\end{align}
We assume that both the \emph{ridge vector} $a\in\R^d$ 
and the univariate function $g$ (also called \emph{ridge profile}) are unknown.

First, we note that the problem is invariant with respect to scaling.
Suppose that $f$ is a ridge function with representation $f(x)=g(\langle a,x\rangle)$.
Then for any scalar $\lambda\in\R\backslash\{0\}$ we put $\tilde g(x):= g(\tfrac{1}{\lambda}x)$ and $\tilde a:=\lambda a$ to get
another representation of $f$ in the form of \eqref{eq:ridge}, namely
\begin{align*}
	\tilde g(\langle\tilde a,x\rangle)=\tilde g(\langle\lambda a,x\rangle)
	=g\left(\langle\frac{1}{\lambda}\lambda a,x\rangle\right)=g(\langle a,x\rangle)=f(x).
\end{align*}
Thus we can pose a scaling condition on $a$ without any loss of generality. Furthermore, if $g'(0)\not=0$, we can switch
from $a$ to $-a$, and obtain a ridge representation of $f$ with $g'(0)>0.$

In \cite{paper3}, the scaling condition $\|a\|_{l_2^d}=1$ was assumed. This fitted together with both the scalar product structure
used in the definition of $f$, as well as with the geometry of the domain of $f$ used in \cite{paper3}, namely the Euclidean unit ball.

It is easy to observe, that it will be more convenient for us to work with the $\ell_1^d$-norm of $a$.
Indeed, let us consider that the ridge profile $g(t)=t$ is known, i.e. that we have  $f(x)=\langle a,x\rangle$ for some (unknown) $a\in\R^d$,
and let us assume, that we have an $l_1^d$-approximation $\hat a$ of $a$ with $\|a-\hat a\|_{l_1^d}\le \eps$. Then H\"older's inequality gives us
\begin{align*}
	\|\hat f-f\|_{\infty}
	=\sup_{x\in [-1,1]^d}\lvert \langle a-\hat a,x\rangle\rvert
	\leq\sup_{x\in [-1,1]^d}\|a-\hat a\|_{l_1^d}\|x\|_{l^d_\infty}
	\le \eps.
\end{align*}

In what follows we shall therefore assume that
\begin{equation}\label{eq:assum1}
\|a\|_{l_1^d}=1
\end{equation}
and that $g$ is a univariate function defined on $I=[-1,1].$
We further assume that $g$ and $g'$ are Lipschitz continuous with constants $c_0,c_1>0$, i.e.
\begin{align}
\label{eq:assum2}\vert g(t_1)-g(t_2)\vert&\leq c_0\vert t_1-t_2\vert,\\
\label{eq:assum3}\vert g'(t_1)-g'(t_2)\vert&\leq c_1\vert t_1-t_2\vert
\end{align}
holds for all $t_1,t_2\in I=[-1,1]$.
Finally, we assume that
\begin{equation}\label{eq:assum4}
g'(0)>0
\end{equation}
as it is known, cf. \cite{MUV}, that approximation of ridge functions may be intractable if this condition is left out.

\subsection{Approximation scheme without sparsity}
In this part we evolve an approximation scheme for ridge functions with an arbitrary ridge vector $a\in\R^d$,
merely assuming the right normalization \eqref{eq:assum1}.
After this we consider the same problem with an additional sparsity condition on $a$, where we will use results from compressed sensing to reduce
the number of samples.

Motivated by the formula \eqref{eq:der} for $x=0$
\begin{equation}\label{eq:nabla}
\nabla f(0)=g'(0)a,
\end{equation}
we set for a small constant $h>0$ and $i\in\{1,\ldots,d\}$
\begin{align}\label{eq:nonoise}
\tilde a_i:=\frac{f(h e_i)-f(0)}{h},
\end{align}
where $e_1,\dots,e_d$ are the usual canonical basis vectors of $\R^d.$
As expected, it turns out that $\tilde a_i$ is a good approximation of $g'(0)a_i$ as the mean value theorem gives
\begin{align*}
	\tilde a_i
	&=\frac{f(h e_i)-f(0)}{h}
	=\frac{g(h\langle a,e_i\rangle)-g(0)}{h}
	=g'(\xi_{h,i}) a_i
\end{align*}
for some $\xi_{h,i}\in(-\vert ha_i\vert,\vert ha_i\vert)$. And for the $\ell_1^d$-approximation we obtain
\begin{align}
\notag\|\tilde a-g'(0)a\|_{l_1^d}
&=\sum\limits_{i=1}^d\lvert\tilde a_i-g'(0)a_i\rvert
=\sum\limits_{i=1}^d\lvert g'(\xi_{h,i})-g'(0)\rvert \lvert a_i\rvert\\
\label{eq:g1}&\leq \sum\limits_{i=1}^d c_1\lvert \xi_{h,i}\rvert \lvert a_i\rvert
\leq \sum\limits_{i=1}^d c_1\lvert ha_i\rvert \lvert a_i\rvert
=c_1 h\sum\limits_{i=1}^d\lvert a_i\rvert^2\\
\notag&\leq c_1 h.
\end{align}
Thus $\tilde a$ is a good approximation to $g'(0) a$ and since we want an approximation to $a$ and we know that $a$ is $l_1^d$-normalized we set
\begin{align*}
	\hat a:=\frac{\tilde a}{\|\tilde a\|_{l_1^d}}.
\end{align*}
Now we have to estimate the difference between $a$ and $\hat a$, therefore we will use a variant of Lemma 3.4 of \cite{paper3}.
\begin{lemma}\label{stability-subspaces}
	Let $x\in\R^d$ with $\|x\|_{l_1^d}=1$, $\tilde x\in\R^d\backslash\{0\}$ and $\lambda\in\R$.
	Then it holds
	\begin{align*}
		\left\|\sgn(\lambda)\frac{\tilde x}{\|\tilde x\|_{l_1^d}}-x\right\|_{l_1^d}
		\leq\frac{2\|\tilde x-\lambda x\|_{l_1^d}}{\|\tilde x\|_{l_1^d}}.
	\end{align*}
\end{lemma}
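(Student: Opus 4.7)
The plan is to reduce the inequality to a one-dimensional scalar estimate combined with one application of the triangle inequality, after clearing the cosmetic factor $\sgn(\lambda)$ from the left-hand side.

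First I would dispose of the trivial case $\lambda=0$: the left-hand side equals $\|x\|_{l_1^d}=1$ while the right-hand side equals $2\|\tilde x\|_{l_1^d}/\|\tilde x\|_{l_1^d}=2$, so the inequality holds. From now on assume $\lambda\neq 0$. Since multiplying a vector by $\sgn(\lambda)\in\{\pm1\}$ does not change its $l_1^d$-norm, the quantity to be estimated equals
\[
\left\|\frac{\tilde x}{\|\tilde x\|_{l_1^d}}-\sgn(\lambda)\,x\right\|_{l_1^d}.
\]
The natural pivot is $\lambda x/\|\tilde x\|_{l_1^d}$. Adding and subtracting this term yields the decomposition
\[
\frac{\tilde x}{\|\tilde x\|_{l_1^d}}-\sgn(\lambda)\,x
=\frac{\tilde x-\lambda x}{\|\tilde x\|_{l_1^d}}
+\left(\frac{\lambda}{\|\tilde x\|_{l_1^d}}-\sgn(\lambda)\right)x,
\]
and the triangle inequality, together with $\|x\|_{l_1^d}=1$, gives two summands: $\|\tilde x-\lambda x\|_{l_1^d}/\|\tilde x\|_{l_1^d}$ and the purely scalar term $\bigl|\lambda/\|\tilde x\|_{l_1^d}-\sgn(\lambda)\bigr|$.

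The second step is to bound this scalar term by the same quantity as the first. Using $\sgn(\lambda)=\lambda/|\lambda|$ one rewrites
\[
\left|\frac{\lambda}{\|\tilde x\|_{l_1^d}}-\sgn(\lambda)\right|
=\frac{\bigl|\,\|\tilde x\|_{l_1^d}-|\lambda|\,\bigr|}{\|\tilde x\|_{l_1^d}},
\]
and since $\|x\|_{l_1^d}=1$ implies $|\lambda|=\|\lambda x\|_{l_1^d}$, the reverse triangle inequality yields
\[
\bigl|\,\|\tilde x\|_{l_1^d}-|\lambda|\,\bigr|
=\bigl|\,\|\tilde x\|_{l_1^d}-\|\lambda x\|_{l_1^d}\,\bigr|
\le \|\tilde x-\lambda x\|_{l_1^d}.
\]
Adding this to the first summand produces exactly the bound $2\|\tilde x-\lambda x\|_{l_1^d}/\|\tilde x\|_{l_1^d}$ claimed in the lemma.

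There is no genuine obstacle here; the only delicate point is choosing the pivot $\lambda x/\|\tilde x\|_{l_1^d}$ (rather than, say, $\tilde x/|\lambda|$), so that both of the resulting summands can be controlled by the single quantity $\|\tilde x-\lambda x\|_{l_1^d}/\|\tilde x\|_{l_1^d}$ and no hidden factor of $|\lambda|/\|\tilde x\|_{l_1^d}$ survives. Once this choice is made, the remainder is just the triangle inequality and its reverse.
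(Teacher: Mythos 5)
Your proof is correct and follows essentially the same route as the paper: after clearing the factor $\sgn(\lambda)$, your pivot $\lambda x/\|\tilde x\|_{l_1^d}$ gives exactly the paper's two-term decomposition, with the first term bounded directly and the scalar term bounded by the reverse triangle inequality via $|\lambda|=\|\lambda x\|_{l_1^d}$. Your separate treatment of $\lambda=0$ is a harmless (and slightly more careful) addition, not a different argument.
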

\begin{proof}
	This lemma is a direct consequence of the triangle inequality. First we obtain
\begin{align*}
	\left\vert\|\tilde x\|_{l_1^d}-\vert\lambda\vert\right\vert
	=\left\vert\|\tilde x\|_{l_1^d}-\|\lambda x\|_{l_1^d}\right\vert
	\leq\|\tilde x-\lambda x\|_{l_1^d}
\end{align*}
and therefore
\begin{align*}
	\left\|\sgn(\lambda)\frac{\tilde x}{\|\tilde x\|_{l_1^d}}-x\right\|_{l_1^d}
	&\leq\left\|\frac{\sgn(\lambda)\tilde x-\sgn(\lambda)\lambda x}{\|\tilde x\|_{l_1^d}}\right\|_{l_1^d}
		+\left\|\frac{\sgn(\lambda)\lambda x-x\|\tilde x\|_{l_1^d}}{\|\tilde x\|_{l_1^d}}\right\|_{l_1^d}\\
	&=\frac{\left\|\tilde x-\lambda x\right\|_{l_1^d}}{\|\tilde x\|_{l_1^d}}
		+\left\|\frac{(\vert\lambda\vert-\|\tilde x\|_{l_1^d}) x}{\|\tilde x\|_{l_1^d}}\right\|_{l_1^d}
	\leq \frac{2\|\tilde x-\lambda x\|_{l_1^d}}{\|\tilde x\|_{l_1^d}},
\end{align*}
which proves the claim.
\end{proof}

\begin{remark}
We only used the triangle inequality to prove the previous lemma.
Thus the lemma remains true for any norm on $\R^d$.
\end{remark}
Applying this lemma to our case it holds with \eqref{eq:g1} and the assumption \eqref{eq:assum4}
\begin{align}\label{eq:stab1}
	\|\sgn(g'(0))\hat a-a\|_{l_1^d}=\|\hat a-a\|_{l_1^d}\leq \frac{2c_1h}{\|\tilde a\|_{l_1^d}}.
\end{align}

Although we now know that $\hat a$ is a good approximation of $a$, it is still not clear how to define the uniform approximation $\hat f$ of $f$.
The naive approach (used with success in \cite{paper3} for ridge functions defined on the Euclidean unit ball) is
to sample $f$ along $\hat a$, i.e. to put $\hat g(t):=f(t\hat a)$, and then define $\hat f(x):=\hat g(\langle\hat a,x \rangle)$.
But when trying to estimate $\|f-\hat f\|_\infty$, we would need to ensure that $\langle \hat a,a\rangle$ is close to 1.
This was indeed the case in \cite{paper3}, where an estimate on $\|\hat a-a\|_{\ell_2^d}$ was obtained, but it is not true any more in our setting
of $\ell_1^d$ approximation.

On the other hand, because of the normalization of $a$, we have
\begin{align*}
	\langle a,\sgn(a)\rangle =\sum\limits_{i=1}^d a_i\cdot\sgn(a_i)=\sum\limits_{i=1}^d \vert a_i\vert=\|a\|_{l_1^d}=1,
\end{align*}
where we defined the \emph{sign} of a vector $x\in\R^d$ entrywise, i.e.
\begin{align*}
	\sgn(x):=(\sgn(x_i))_i\in\R^d.
\end{align*}
Note that this function is discontinuous, hence $\sgn(a)$ and $\sgn(\hat a)$ can be far from each other,
even if the difference $\|a-\hat a\|_{l_1^d}$ is small.
Nevertheless their scalar product with $a$ is nearly the same as  H\"older's inequality gives
\begin{align}
\notag\vert\langle a,\sgn(a)-\sgn(\hat a)\rangle\vert
&=\vert\langle a,\sgn(a)\rangle-\langle \hat a,\sgn(\hat a)\rangle -\langle a-\hat a,\sgn(\hat a)\rangle\vert\\
\label{eq:a1}&\leq\|a-\hat a\|_{l_1^d}\|\sgn(\hat a)\|_{l_\infty^d}=\|a-\hat a\|_{l_1^d}.
\end{align}
Thus we define
\begin{align}\label{def:g}
\hat g\colon[-1,1]\to\R,~t\mapsto f\big(t\cdot\sgn(\hat a)\big)
\end{align}
and
\begin{align}\label{def:f}
\hat f(x)=\hat g(\langle\hat a,x\rangle).
\end{align}
Let us summarize our approximation algorithm as follows.
\vskip.2cm\noindent
\framebox[\textwidth][s]
{
\begin{minipage}{0.9\textwidth}
\vskip.2cm\centerline{{\bf Algorithm A}}\vskip-.10cm
\noindent\makebox[\linewidth]{\rule{10cm}{0.6pt}}
\begin{itemize}[leftmargin=*]
\item \emph{Input:} Ridge function $f(x)=g(\langle a,x\rangle)$ with \eqref{eq:assum1}-\eqref{eq:assum4} and $h>0$ small
\item Put $\displaystyle \tilde a_i:=\frac{f(h e_i)-f(0)}{h}, i=1,\dots,d$
\item Put $\displaystyle \hat a:=\frac{\tilde a}{\|\tilde a\|_{l_1^d}}$
\item Put $\hat g(t)=f(t\cdot \sgn(\hat a))$ and $\hat f(x)=\hat g(\langle\hat a,x\rangle)$
\item \emph{Output:} $\hat f$
\end{itemize}
\vskip.2cm
\end{minipage}
}
\vskip.2cm

We formulate the approximation properties of Algorithm A as the following theorem.

\begin{theo}\label{thm:AlgA}
Let $f\colon[-1,1]^d\to\R$ be a ridge function with $f(x)=g(\langle a,x\rangle)$ for some $a\in\R^d$ 
with \eqref{eq:assum1} and a differentiable function $g\colon[-1,1]\to \R$ with \eqref{eq:assum2}-\eqref{eq:assum4}. For $h>0$ we construct the
function $\hat f$ as described in Algorithm A. Then
\begin{align}\label{eq:unest1}
\|f-\hat f\|_{\infty}\leq 2c_0\|\hat a-a\|_{l_1^d}\leq\frac{4c_0c_1h}{g'(0)-c_1h},
\end{align}
where the last inequality only holds if $g'(0)-c_1h$ is positive.
\end{theo}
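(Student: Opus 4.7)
The plan is to split the stated inequality into its two parts and prove them in order. The first part bounds $\|f-\hat f\|_\infty$ by $2c_0\|\hat a-a\|_{l_1^d}$ and is the heart of the proof; the second part simply collects the estimates that have already been derived in the preceding discussion.

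For the first inequality, I would fix $x\in[-1,1]^d$ and unfold the definitions to write
\[
\hat f(x)=\hat g(\langle\hat a,x\rangle)=f\bigl(\langle\hat a,x\rangle\cdot\sgn(\hat a)\bigr)=g\bigl(\langle\hat a,x\rangle\cdot\langle a,\sgn(\hat a)\rangle\bigr),
\]
so that, by the Lipschitz assumption \eqref{eq:assum2} on $g$,
\[
|f(x)-\hat f(x)|\le c_0\bigl|\langle a,x\rangle-\langle\hat a,x\rangle\langle a,\sgn(\hat a)\rangle\bigr|.
\]
The key algebraic step is to add and subtract $\langle\hat a,x\rangle$ and split the right-hand side as
\[
\langle a-\hat a,x\rangle+\langle\hat a,x\rangle\bigl(1-\langle a,\sgn(\hat a)\rangle\bigr).
\]
For the first summand, H\"older's inequality together with $\|x\|_{l_\infty^d}\le 1$ gives the bound $\|a-\hat a\|_{l_1^d}$. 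For the second, I use that $\|\hat a\|_{l_1^d}=1$ by construction, so $|\langle\hat a,x\rangle|\le 1$; then, recalling $\langle a,\sgn(a)\rangle=\|a\|_{l_1^d}=1$, I rewrite $1-\langle a,\sgn(\hat a)\rangle=\langle a,\sgn(a)-\sgn(\hat a)\rangle$ and apply exactly the estimate \eqref{eq:a1} to bound this by $\|a-\hat a\|_{l_1^d}$ as well. Adding the two bounds yields the factor $2$.

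For the second inequality, I would simply invoke the estimate \eqref{eq:stab1}, namely $\|\hat a-a\|_{l_1^d}\le 2c_1h/\|\tilde a\|_{l_1^d}$, and bound $\|\tilde a\|_{l_1^d}$ from below by the reverse triangle inequality together with \eqref{eq:g1}:
\[
\|\tilde a\|_{l_1^d}\ge g'(0)\|a\|_{l_1^d}-\|\tilde a-g'(0)a\|_{l_1^d}\ge g'(0)-c_1h,
\]
which is positive by hypothesis. Combining with the first part gives the claimed $4c_0c_1h/(g'(0)-c_1h)$.

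The only subtle point is the discontinuity of $\sgn$, which a priori would make the scalar product $\langle a,\sgn(\hat a)\rangle$ an unreliable proxy for $\langle a,\sgn(a)\rangle=1$; the whole proof hinges on the observation, already recorded in \eqref{eq:a1}, that although $\sgn(\hat a)$ itself need not be close to $\sgn(a)$, its pairing against the fixed vector $a$ is stable under small $l_1^d$-perturbations. Once this is in hand, everything else is a short calculation and no further obstacle arises.
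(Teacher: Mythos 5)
Your proposal is correct and follows essentially the same route as the paper: the second inequality via \eqref{eq:stab1} together with the reverse triangle inequality and \eqref{eq:g1} to bound $\|\tilde a\|_{l_1^d}\ge g'(0)-c_1h$, and the first inequality via H\"older plus the stability estimate \eqref{eq:a1} for $\langle a,\sgn(\hat a)\rangle$. The only cosmetic difference is that you apply the Lipschitz bound once to the combined argument and split inside the scalar products, whereas the paper inserts $g(\langle\hat a,x\rangle)$ and uses the triangle inequality at the level of $g$; the two decompositions are equivalent and yield the same factor $2c_0$.
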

\begin{proof}
First, we show that
\begin{align}\label{eq:unest2}
	\|\hat a-a\|_{l_1^d}
	\leq\frac{2hc_1}{\|\tilde a\|_{l_1^d}}
	\leq\frac{2hc_1}{g'(0)-c_1h},
\end{align}
where the last inequality only holds if $g'(0)-c_1h$ is positive.

Due to \eqref{eq:stab1}, we only have to show the last inequality of \eqref{eq:unest2}. Since $g'$ is Lipschitz continuous with Lipschitz constant $c_1$ we have
for any $y\in[-h,h]$
\begin{align*}
	g'(0)-\vert g'(y)\vert \leq\vert g'(0)-g'(y)\vert \leq c_1\vert 0-y\vert \leq c_1 h
\end{align*}
and therefore
\begin{align*}
|g'(y)|\geq g'(0)-c_1 h.
\end{align*}
With $\tilde a_i=g'(\xi_{h,i})a_i$ for some $\xi_{h,i}\in(-\vert h a_i\vert,\vert h a_i\vert)\subset[-h,h]$, it follows by the triangle inequality and \eqref{eq:g1}
\begin{align*}
\|\tilde a\|_{l_1^d}\ge \|g'(0)a\|_{l_1^d} - \|\tilde a - g'(0)a\|_{l_1^d}
\ge g'(0)-c_1 h
\end{align*}
which proves \eqref{eq:unest2} and the second inequality in \eqref{eq:unest1}.

To prove the first inequality in \eqref{eq:unest1}, we use \eqref{eq:assum2} and \eqref{eq:a1}
to show that $\hat g$ is a good uniform approximation of $g$ on $[-1,1]$. We obtain
\begin{align}
\notag\vert g(t)-\hat g(t)\vert&=\vert g(t)-g(\langle a,t\cdot\sgn(\hat a)\rangle)\vert\leq c_0\vert t-t\langle a,\sgn(\hat a)\rangle\vert\\
\label{eq:g_est}&= c_0\vert t\vert\left\vert\langle a,\sgn(a)-\sgn(\hat a)\rangle\right\vert\leq c_0\|a-\hat a\|_{l_1^d}
\end{align}
for each $t\in[-1,1]$. Finally, we combine this estimate with the definition of $\hat f$ as given in \eqref{def:f} and arrive at
\begin{align}
\notag\vert \hat f(x)-f(x)\vert&=\vert \hat g(\langle\hat a,x\rangle)-g(\langle a,x\rangle)\vert\\
\label{eq:f_est}&\leq\vert \hat g(\langle\hat a,x\rangle)-g(\langle\hat a,x\rangle)\vert+\vert g(\langle\hat a,x\rangle)-g(\langle a,x\rangle)\vert\\
\notag&\leq c_0\|a-\hat a\|_{l_1^d}+c_0\vert\langle a-\hat a,x\rangle\vert\leq 2c_0\|a-\hat a\|_{l_1^d}.
\end{align}
\end{proof}

\begin{remark} \label{rem:3.4}
\begin{enumerate}
\item [(i)] The estimate \eqref{eq:unest1} depends heavily on the value of $g'(0)$. Especially, the approximation becomes difficult, when
this value gets smaller and \eqref{eq:unest1} becomes void if $g'(0)=0.$ This is a very well known aspect of approximation of ridge functions,
which was studied in a great detail in \cite{MUV}. We refer also to a slightly weaker condition used in \cite{paper3}.
\item [(ii)] If $\|a\|_{l_2^d}$ is small, the following improvement of \eqref{eq:unest1} becomes of interest. First, we observe that \eqref{eq:g1}
can be improved to $\|\tilde a-g'(0)a\|_{l_1^d}\le c_1h\|a\|^2_{l_2^d}$, which results into
\[
\|\hat a-a\|_{l_1^d}\le \frac{2c_1h\|a\|^2_{l_2^d}}{\|\tilde a\|_{l_1^d}}.
\]
Finally, this allows to improve \eqref{eq:unest1} to
\[
\|f-\hat f\|_{\infty}\leq\frac{4c_0c_1h\|a\|^2_{l_2^d}}{g'(0)-c_1h\|a\|^2_{l_2^d}}.
\]
\end{enumerate}
\end{remark}

\subsection{Approximation with sparsity}
In this subsection we assume that the ridge vector $a\in\R^d$ is not only $\ell_1^d$-normalized, but satisfies also some sparsity condition, i.e. most of the entries of $a$ are zero
or at least very small. We will use techniques of compressed sensing
to address the approximation of the ridge vector $a$, afterwards we obtain an approximation of $f$ in the same way as before.

Let $\Phi\in\R^{m\times d}$ be a normalized Bernoulli matrix and let $\varphi_1,\dots,\varphi_m$ be its rows. Taking their scalar product with the quantities in \eqref{eq:nabla},
we obtain
\begin{equation}\label{eq:diff}
\frac{\partial f}{\partial \varphi_j}(0)=\langle \nabla f(0),\varphi_j\rangle = g'(0)\langle a,\varphi_j\rangle.
\end{equation}

We use again first order differences as an approximation of the directional derivatives in \eqref{eq:diff}, i.e. we set
\begin{align*}
\tilde b_j:=\frac{f(h\ph_j)-f(0)}{h}.
\end{align*}
As in the previous section the mean value theorem gives the existence of some
$\xi_{h,j}$ with $|\xi_{h,j}|\le |h|\cdot \vert\langle a,\ph_j\rangle\vert$ such that
\begin{align*}
\tilde b_j=g'(\xi_{h,j})\langle a,\ph_j\rangle.
\end{align*}
In this sense, we expect $\tilde b_j$ to be a good approximation of $g'(0)\langle a,\varphi_j \rangle$ and $\tilde b$ to be 
a good approximation of $g'(0)\Phi a.$ Hence, we recover $\tilde a$ through $\ell_1$-minimization. From this point on, we may continue as before.
Let us summarize this procedure as the Algorithm B.
\vskip.2cm\noindent
\framebox[\textwidth][s]
{
\begin{minipage}{0.9\textwidth}
\vskip.2cm\centerline{{\bf Algorithm B}}\vskip-.10cm
\noindent\makebox[\linewidth]{\rule{10cm}{0.6pt}}
\begin{itemize}[leftmargin=*]
\item \emph{Input:} Ridge function $f(x)=g(\langle a,x\rangle)$ with \eqref{eq:assum1}-\eqref{eq:assum4}, $h>0$ small and $m\le d/(\log 6)^2$
\item Take $\Phi\in\R^{m\times d}$ a normalized Bernoulli matrix, cf. \eqref{eq:Bern}
\item Put $\displaystyle \tilde b_j:=\frac{f(h\ph_j)-f(0)}{h},~j=1,\ldots,m$
\item Put $\displaystyle\tilde a:=\Delta_{l_1^d}(\tilde b)=\argmin_{w\in\R^d}\ \|w\|_{l_1^d}\ \text{s.t.}\ \Phi w=\tilde b$
\item Put $\displaystyle \hat a:=\frac{\tilde a}{\|\tilde a\|_{l_1^d}}$
\item Put $\hat g(t)=f(t\cdot \sgn(\hat a))$ and $\hat f(x)=\hat g(\langle\hat a,x\rangle)$
\item \emph{Output:} $\hat f$
\end{itemize}
\vskip.2cm
\end{minipage}
}
\vskip.2cm

\begin{theo}\label{thm:AlgB}
Let $f\colon[-1,1]^d\to\R$ be a ridge function with $f(x)=g(\langle a,x\rangle)$ for some vector $a\in\R^d$ with \eqref{eq:assum1} and some differentiable
function $g\colon[-1,1]\to \R$ with \eqref{eq:assum2}--\eqref{eq:assum4}.
Let $d\geq(\log 6)^2m$ and $h>0$ be fixed. 
Then there exist some constants $C_0',C_1,C_2>0$,  such that for every positive integer $s$ with $2s\leq(C_2 m)/\log(d/m)$ the function $\hat f$ constructed in Algorithm B satisfies
\begin{align}\label{eq:funif}
\|f-\hat f\|_{\infty}\leq 2c_0\|\hat a -a\|_{l_1^d}\leq 2c_0\,\err(a,\hat a),
\end{align}
where
\begin{align*}
\err(a,\hat a):=C_0'\cdot\frac{g'(0)\cdot\sigma_s^d(a)_{1}+h}{g'(0)(1-\sigma_s^d(a)_1)-c_1h},
\end{align*}
with probability at least $1-e^{-\sqrt{md}}-e^{-C_1 m}$ provided the denominator in the expression for $\err(a,\hat a)$ is positive.
\end{theo}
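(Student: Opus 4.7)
The plan is to follow the same skeleton as the proof of Theorem \ref{thm:AlgA}, but use compressed sensing tools to bound $\|\hat a - a\|_{l_1^d}$ instead of the direct finite-difference bound. The first inequality in \eqref{eq:funif}, namely $\|f-\hat f\|_\infty \le 2c_0\|\hat a-a\|_{l_1^d}$, is inherited verbatim from the proof of Theorem \ref{thm:AlgA}: the construction of $\hat g$ and $\hat f$ in Algorithm B is identical to that of Algorithm A once $\hat a$ is in hand, so the estimates \eqref{eq:g_est}--\eqref{eq:f_est} apply without change. Thus the entire burden is to establish the second inequality, $\|\hat a - a\|_{l_1^d} \le \err(a,\hat a)$.

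For this I first quantify how well $\tilde b$ approximates $g'(0)\Phi a$. Writing $\tilde b_j = g'(\xi_{h,j})\langle a,\varphi_j\rangle$ via the mean value theorem, the Lipschitz bound \eqref{eq:assum3} gives
\[
|e_j| := |\tilde b_j - g'(0)\langle a,\varphi_j\rangle| \le c_1 h\,|\langle a,\varphi_j\rangle|^2.
\]
Since $\Phi$ is a normalized Bernoulli matrix, $\|\varphi_j\|_{l_\infty^d}=1/\sqrt m$, so H\"older combined with $\|a\|_{l_1^d}=1$ gives $|\langle a,\varphi_j\rangle|\le 1/\sqrt m$, hence $\|e\|_{l_\infty^m}\le c_1 h/m$ and $\|e\|_{l_2^m}^2 \le (c_1h)^2 m^{-1}\|\Phi a\|_{l_2^m}^2 \le (c_1 h)^2/m$. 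These two estimates yield $\|e\|_J \le C\,c_1 h$ for an absolute constant.

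Next I invoke Theorem \ref{surjective-deterministic-noise}(i) (valid thanks to $d\ge(\log 6)^2 m$) to produce, with probability at least $1-e^{-\sqrt{md}}$, a vector $u\in\R^d$ with $\Phi u=e$ and $\|u\|_{l_1^d}\le C_3\|e\|_J \le C'\,c_1 h$. Then $\tilde b = \Phi(g'(0)a+u)$, so the $\ell_1^d$-minimizer $\tilde a$ satisfies $\Phi\tilde a=\Phi(g'(0)a+u)$. Under the sparsity constraint $2s\le(C_2m)/\log(d/m)$ and the choice $\delta=1/6$, Theorem \ref{rip} supplies (with probability $1-2e^{-C_1 m}$) a RIP constant $\delta_{2s}\le 1/6<1/3$, so Theorem \ref{RIP-reconstruction} applies and gives
\[
\|\tilde a - (g'(0)a+u)\|_{l_1^d} \le C_0\,\sigma_s^d(g'(0)a+u)_1 \le C_0\bigl(g'(0)\sigma_s^d(a)_1+\|u\|_{l_1^d}\bigr).
\]
Combining with $\|u\|_{l_1^d}\le C'c_1 h$ via the triangle inequality yields
\[
\|\tilde a - g'(0)a\|_{l_1^d} \le C''\bigl(g'(0)\,\sigma_s^d(a)_1 + h\bigr).
\]

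To conclude I estimate $\|\tilde a\|_{l_1^d}$ from below by the reverse triangle inequality,
\[
\|\tilde a\|_{l_1^d} \ge g'(0)\|a\|_{l_1^d} - \|\tilde a - g'(0)a\|_{l_1^d} \ge g'(0)\bigl(1-C''\sigma_s^d(a)_1\bigr) - C''h,
\]
and then apply Lemma \ref{stability-subspaces} with $\lambda=g'(0)>0$ to obtain
\[
\|\hat a - a\|_{l_1^d} \le \frac{2\|\tilde a - g'(0)a\|_{l_1^d}}{\|\tilde a\|_{l_1^d}} \le \frac{2C''\bigl(g'(0)\sigma_s^d(a)_1+h\bigr)}{g'(0)\bigl(1-C''\sigma_s^d(a)_1\bigr)-C''h},
\]
which, after absorbing constants into $C_0'$ (and, if necessary, $c_1$), matches the claimed $\err(a,\hat a)$. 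A union bound over the RIP event and the surjectivity event of Theorem \ref{surjective-deterministic-noise}(i) produces the probability $1-e^{-\sqrt{md}}-e^{-C_1 m}$ stated in the theorem. The main obstacle I anticipate is the careful bookkeeping of constants so that the denominator appearing in $\err(a,\hat a)$ comes out in the precise form $g'(0)(1-\sigma_s^d(a)_1)-c_1h$ rather than with extra factors; this is resolved simply by absorbing the multiplicative constants from Theorems \ref{RIP-reconstruction} and \ref{surjective-deterministic-noise} into $C_0'$, which the statement explicitly allows.
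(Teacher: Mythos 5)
Your proposal is correct and follows essentially the same route as the paper's own proof: mean value theorem plus H\"older to bound the perturbation $e$, Theorem \ref{surjective-deterministic-noise} to lift $e$ to a vector $u$ with $\Phi u=e$ and $\|u\|_{l_1^d}\lesssim c_1h$, Theorem \ref{rip} and Theorem \ref{RIP-reconstruction} applied to $g'(0)a+u$, the reverse triangle inequality to bound $\|\tilde a\|_{l_1^d}$ from below, and Lemma \ref{stability-subspaces} to pass to the normalized $\hat a$, with the same union bound for the probability. Even your final hand-wave about absorbing multiplicative constants so that the denominator reads $g'(0)(1-\sigma_s^d(a)_1)-c_1h$ mirrors the paper's own (equally loose) last step, so nothing is missing relative to the published argument.
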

\begin{proof}
The first inequality in \eqref{eq:funif} follows again by \eqref{eq:g_est} combined with \eqref{eq:f_est}.

Setting $\tilde b:=(\tilde b_1,\ldots,\tilde b_m)^T\in\R^m$ and $b:=g'(0)\Phi a\in\R^m$ we get
\begin{align*}
	\|\tilde b-b\|_{l_1^d}
	&=\sum\limits_{j=1}^m\vert g'(\xi_{h,j})\langle a,\ph_j\rangle-g'(0)\langle a,\ph_j\rangle\vert
	=\sum\limits_{j=1}^m\vert g'(\xi_{h,j})-g'(0)\vert \vert\langle a,\ph_j\rangle\vert\\
	&\leq\sum\limits_{j=1}^m c_1h\vert\langle a,\ph_j\rangle\vert^2
	\leq c_1 h \sum\limits_{j=1}^m \|a\|_{l_1^d}^2 \|\ph_j\|_{l_\infty^d}^2\\
	&=c_1 h.
\end{align*}
Therefore we obtain
\begin{align}
	\tilde b=b+e=g'(0)\Phi a+e
\end{align}
for $e\in\R^m$ with $\|e\|_{l_1^m}\leq c_1 h$ and, similarly, $\|e\|_{l_\infty^m}\leq c_1 h/m$ and $\|e\|_{l_2^m}\leq c_1h/\sqrt{m}$.
Hence, by using Theorem \ref{surjective-deterministic-noise} there exists some vector $u\in\R^d$ with $\Phi u=e$ and
\begin{align*}
	\|u\|_{l_1^d}
	&\leq\max\left\{\sqrt{m}\|e\|_{l_\infty^m}~;~\sqrt{\frac{m}{\log(d/m)}}\|e\|_{l_2^m}\right\}\\
	&\leq\max\left\{\frac{c_1h}{\sqrt{m}}~;~\frac{c_1h}{\sqrt{\log(d/m)}}\right\}\\
	&\leq\sqrt{2}c_1h,
\end{align*}
where we used $m\geq\log(d)$ and $d\geq(\log 6)^2m$ for the last inequality.
Take some $1/3>\delta>0$ fixed, e.g. $\delta=1/6$, and apply Theorem \ref{RIP-reconstruction} to $g'(0)a+u$. This gives us
\begin{align*}
	\|\Delta_{l_1^d}(\tilde b)-g'(0)a\|_{l_1^d}&=\|\Delta_{l_1^d}\left(\Phi(g'(0)a+u)\right)-g'(0)a\|_{l_1^d}\\
	&\leq \|\Delta_{l_1^d}\left(\Phi(g'(0)a+u)\right)-g'(0)a-u\|_{l_1^d} +\|u\|_{l_1^d}\\
	&\leq C_0\cdot\sigma_s^d\left(g'(0)a+u\right)_{1}+\|u\|_{l_1^d}\\
	&\leq C_0 g'(0)\cdot\sigma_s^d(a)_{1}+(1+C_0)\|u\|_{l_1^d}\\
	&\leq (1+C_0)\left(g'(0)\cdot\sigma_s^d(a)_{1}+\|u\|_{l_1^d}\right).
\end{align*}
Finally, by setting $\tilde a:=\Delta_{l_1^d}(\tilde b)$ and $\hat a:=\tilde a/\|\tilde a\|_{l_1^d}$,
Lemma \ref{stability-subspaces} provides
\begin{align*}
	\|a-\hat a\|_{l_1^d}
	&\leq2(1+C_0)\cdot\frac{g'(0)\cdot\sigma_s^d(a)_1+\|u\|_{l_1^d}}{\|\tilde a\|_{l_1^d}}\\
	&\leq 2\sqrt{2}(1+C_0)\cdot\frac{g'(0)\cdot\sigma_s^d(a)_1+c_1h}{\|\tilde a\|_{l_1^d}}.
\end{align*}
From this point on we can proceed as in the proof of Theorem \ref{thm:AlgA}. We can again estimate the $l_1^d$-norm of $\tilde a$ from below. We get
\begin{align*}
	\|\tilde a\|_{l_1^d}
	&=\|\Delta_{l_1^d}(\Phi(g'(0)a+u))\|_{l_1^d}\\
	&\geq \|g'(0)a+u\|_{l_1^d}-\|\Delta_{l_1^d}(\Phi(g'(0)a+u))-g'(0)a-u\|_{l_1^d}\\
	&\geq g'(0)\|a\|_{l_1^d}-\|u\|_{l_1^d} - \sigma_s^d(g'(0)a+u)_{1}\\
	&\geq g'(0)-2\|u\|_{l_1^d}-g'(0)\sigma_s^d(a)_{1}\\
	&\geq g'(0)-2\sqrt{2}c_1h-g'(0)\sigma_s^d(a)_{1}.
\end{align*}
Thus we can replace the norm $\|\tilde a\|_{l_1^d}$ with this expression (if it is positive) to get
\begin{align*}
	\|f-\hat f\|_{\infty}
	&\leq2c_0 C_0'\cdot\frac{ g'(0)\cdot\sigma_s^d(a)_{1}+c_1h}{\|\tilde a\|_{l_1^d}}\\
	&\leq C\cdot\frac{g'(0)\cdot\sigma_s^d(a)_{1}+h}{g'(0)(1-\sigma_s^d(a)_1)-c_1h}
\end{align*}
for some constant $C$ depending only on $c_0,c_1,C_0,C_0'$.
\end{proof}

\begin{remark}\label{rem:AlgB}~
\begin{enumerate}
\item[(i)] In particular, if $a$ is $s$-sparse, we get $\sigma_s^d(a)_1=0$ and, therefore,
\begin{align*}
	\|f-\hat f\|_\infty\leq C\frac{h}{g'(0)-c_1h}.
\end{align*}
\item[(ii)] The constant $C_0'$ can be chosen to be $C_0'=2\sqrt{2}(1+C_0)$ with $C_0$ being the constant from Theorem \ref{RIP-reconstruction}.
\item[(iii)] If the sparsity level of $a$ is $s\in\N$, the condition $2s\leq(C_2 m)/\log(d/m)$ implies
$m\geq 2s\log(d)/C_2$. Thus, in this case we need $m=O(s\log d)$ measurements to reconstruct the vector $a$.
\end{enumerate}
\end{remark}

\section{Approximation of ridge functions with noisy measurements}
\label{stability-ridge}

In this section we study another aspect of recovery of ridge functions, which was hardly discussed up to now in the literature.
We consider ridge functions defined on the unit ball as in \cite{paper3} but we assume that
the measurements are affected by random noise. In addition, we suppose that the vector $a$ satisfies a compressibility condition.

To be more precise, we consider ridge functions
\begin{align*}
f\colon B^d=\{x\in\R^d\mid\|x\|_{l_2^d}<1\}\to\R,~x\mapsto f(x)=g(\langle a,x\rangle).
\end{align*}
We assume, that the ridge vector $a\in\R^d$ is $l_2^d$-normalized 
\begin{equation}\label{eq:assump10}
\|a\|_{l_2^d}=1
\end{equation} 
and compressible in the following sense,
\begin{equation}\label{eq:assump11}
\|a\|_{l_1^d}\leq R,\quad  R>0.
\end{equation}
Furthermore, we assume that the ridge profile is a differentiable function $g\colon[-1,1]\to\R$ with \eqref{eq:assum2}--\eqref{eq:assum4}.

We shall use again the setting of Remark \ref{rem:setting}.
Let $d\geq (\log 6)^2m$ and let $\Phi\in\R^{m\times d}$ be a normalized Bernoulli matrix \eqref{eq:Bern} with rows $\ph_1,\ldots,\ph_m$. By
Theorem \ref{rip} it is ensured that $\Phi$ satisfies the RIP of order $2s$ with $0<\delta_{2s}\le \delta:=1/6$ with high probability
for every positive integer $s$ with $3s\leq(C_2 m)/\log(d/m)$, where the constant $C_2$ is the constant from Theorem \ref{rip}.


But in contrary to \eqref{eq:nonoise}, we now assume that the evaluation of $f$ is perturbed by noise.
To make the presentation technically simpler, we shall assume that the value $f(0)$ is given precisely (i.e. without noise).
This can be achieved (with high precision) by resampling the value $f(0)$ several times, and applying Hoeffding's inequality.

Hence, we set for $j=1,\ldots,m$ and a small constant $h>0$
\begin{align*}
	b_j:=\frac{(f(h\ph_j)+\tilde z_j)-f(0)}{h}
	=\frac{f(h\ph_j)-f(0)}{h}+\frac{\tilde z_j}{h}.
\end{align*}
We assume that the random noise $\tilde z=(\tilde z_1,\dots,\tilde z_m)^T\in\R^m$ has independent components $\tilde z_j\sim\mathcal{N}(0,\sigma^2)$.
Since $\tilde z_j$ are independent, it is well known that
\begin{align}\label{eq:noiseh}
z_j:=\frac{\tilde z_j}{h}\sim\mathcal{N}\left(0,\frac{\sigma^2}{h^2}\right)
\end{align}
are also independent. As in the case with exact measurements the mean value theorem gives us
\begin{align*}
	\frac{f(h\ph_j)-f(0)}{h}
	=\frac{g(\langle a,h\ph_j\rangle)-g(0)}{h}
	=g'(\xi_{h,j})\langle a,\ph_j\rangle
\end{align*}
for some real $\xi_{h,j}$ with $|\xi_{h,j}|\le \vert\langle a,h\ph_j\rangle\vert$, hence
\begin{align*}
	b_j=g'(\xi_{h,j})\langle a,\ph_j\rangle+z_j.
\end{align*}
To recover the vector $a$ from these measurements let us first define the deterministic noise $e\in\R^m$ by
\begin{equation}\label{eq:noise1}
e_j=\langle a,\ph_j\rangle(g'(\xi_{h,j})-g'(0)),\quad j=1,\dots,m,
\end{equation}
i.e.
\begin{align}\label{eq:defb}
	b=g'(0)\Phi a+e+z.
\end{align}

We then recover $\hat a$ with the help of Dantzig selector \eqref{eq:Dantz} instead of $l_1$-minimization. Finally, for the construction of $\hat g$ and $\hat f$,
we can use the direct approach of \cite{paper3}, which is given by
\begin{align*}
\hat g\colon\R\to\R,~t\mapsto f(t\hat a)\quad\text{and}\quad\hat f\colon B^d\to\R,~x\mapsto\hat g(\langle \hat a,x\rangle).
\end{align*}

Let us summarize this procedure as the following algorithm.
\vskip.2cm\noindent
\framebox[\textwidth][s]
{
\begin{minipage}{0.9\textwidth}
\vskip.2cm\centerline{{\bf Algorithm C}}\vskip-.10cm
\noindent\makebox[\linewidth]{\rule{10cm}{0.6pt}}
\begin{itemize}[leftmargin=*]
\item \emph{Input:} Ridge function $f(x)=g(\langle a,x\rangle)$ with \eqref{eq:assump10}, \eqref{eq:assump11}, \eqref{eq:assum2}-\eqref{eq:assum4}, $h,\sigma>0$
and $m\le d/(\log 6)^2$
\item Construct the $m\times d$ normalized Bernoulli matrix $\Phi$, c.f. \eqref{eq:Bern}, with rows denoted by $\varphi_1,\dots,\varphi_m\in\R^d$
\item Put $\displaystyle b_j=\frac{(f(h\ph_j)+\tilde z_j)-f(0)}{h}$, $j=1,\dots,m$
\item Put $\displaystyle \hat a=\frac{\Delta_{DS}(b)}{\|\Delta_{DS}(b)\|_{l_2^d}}$ for $\lambda_d=\sqrt{2\log d}$\\
\item Put $\displaystyle \hat g\colon\R\to\R,~t\mapsto f(t\hat a)$
\item Put $\displaystyle \hat f\colon B^d\to\R,~x\mapsto\hat g(\langle \hat a,x\rangle)$
\item \emph{Output:} $\hat f$
\end{itemize}
\vskip.2cm
\end{minipage}
}
\vskip.2cm

\begin{theo}
Let $f\colon B^d\to\R$ be a ridge function $f(x)=g(\langle a,x\rangle)$ with \eqref{eq:assump10}, \eqref{eq:assump11}, \eqref{eq:assum2}-\eqref{eq:assum4}.
Furthermore, let  $h,\sigma>0$ and let $s\le m\le d$ be positive integers with \eqref{eq:setting}.
Let $\tilde z_j\sim\mathcal{N}(0,\sigma^2)$ be independent. Then there is a constant $C_2>0$, such that
the function $\hat f$ defined by Algorithm C satisfies 
with high probability
\begin{align}\label{eq:unestf}
\|f-\hat f\|_\infty\leq2c_0\|a-\hat a\|_{l_2^d}\leq \frac{4 c_0\err(a,\hat a)}{g'(0)-\err(a,\hat a)},
\end{align}
where
\begin{align}\label{eq:noisehh}
\err(a,\hat a)&:=\left(\min\limits_{1\leq s_*\leq s}2 C_5 \log d \left(s_*\frac{\sigma^2}{h^2}+\tilde R^2 s_*^{-1}\right)\right)^{\frac 12} + C_7\frac{h}{\sqrt{s}},\\
\notag \tilde R&:= 2(R+2C_6h)
\end{align}
for some constants $C_5,C_6,C_7$. The second inequality in \eqref{eq:unestf} only holds if the denominator is positive.
\end{theo}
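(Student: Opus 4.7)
The plan is to mirror the argument of Theorem \ref{thm:AlgB} but in the $\ell_2^d$ geometry, using the Dantzig selector in place of $\ell_1$-minimization to accommodate the Gaussian noise. The two inequalities in \eqref{eq:unestf} will be proved separately: the first inequality is a deterministic uniform-approximation estimate that only uses $\|a\|_{\ell_2^d}=\|\hat a\|_{\ell_2^d}=1$ together with the Lipschitz property of $g$; the second inequality is the compressed-sensing estimate that funnels the noisy measurements through Theorem \ref{random+deterministic}.

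For the first inequality, I would note that since $\hat a$ is $\ell_2^d$-normalized, $\hat g(t)=f(t\hat a)=g(t\langle a,\hat a\rangle)$. By Lipschitz continuity of $g$ and the identity $1-\langle a,\hat a\rangle=\tfrac12\|a-\hat a\|_{\ell_2^d}^2$ valid for unit vectors, one obtains
\[
|\hat g(t)-g(t)|=|g(t\langle a,\hat a\rangle)-g(t)|\le c_0|t|\cdot\tfrac12\|a-\hat a\|_{\ell_2^d}^2
\]
for $|t|\le 1$. For $x\in B^d$, splitting $\hat f(x)-f(x)=[\hat g(\langle\hat a,x\rangle)-g(\langle\hat a,x\rangle)]+[g(\langle\hat a,x\rangle)-g(\langle a,x\rangle)]$ and applying Cauchy–Schwarz to the second bracket yields $|\hat f(x)-f(x)|\le \tfrac{c_0}2\|a-\hat a\|_{\ell_2^d}^2+c_0\|a-\hat a\|_{\ell_2^d}$, which is bounded by $2c_0\|a-\hat a\|_{\ell_2^d}$ since $\|a-\hat a\|_{\ell_2^d}\le 2$.

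For the second inequality, I would rewrite the measurement vector as in \eqref{eq:defb}: $b=g'(0)\Phi a+e+z$ with $e_j$ given by \eqref{eq:noise1} and $z_j\sim\mathcal N(0,\sigma^2/h^2)$ independent by \eqref{eq:noiseh}. Using the Lipschitz property of $g'$ together with $|\xi_{h,j}|\le h|\langle a,\varphi_j\rangle|$ and the $\ell_\infty$ bound $\|\varphi_j\|_{\ell_\infty^d}=1/\sqrt m$ of the Bernoulli rows, I get the pointwise estimate $|e_j|\le c_1 h|\langle a,\varphi_j\rangle|^2\le c_1 h R^2/m$, hence $\|e\|_{\ell_\infty^m}\lesssim h/\sqrt m$ and $\|e\|_{\ell_2^m}\lesssim h/\sqrt m$, which fits into the hypothesis of Theorem \ref{random+deterministic} with the effective noise scale $\eps\sim h/\sqrt m$ and with $\|g'(0)a\|_{\ell_{1,\infty}^d}\le g'(0)R$ absorbed into the constant. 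Applying Theorem \ref{random+deterministic} (noting that $z$ has variance $\sigma^2/h^2$) then produces
\[
\|\Delta_{DS}(b)-g'(0)a\|_{\ell_2^d}\le \err(a,\hat a)
\]
with high probability, with $\err(a,\hat a)$ exactly as in \eqref{eq:noisehh} after rescaling $\eps\sqrt m\mapsto h$ and updating $\tilde R$ accordingly.

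To pass from an estimate on $\Delta_{DS}(b)$ to one on $\hat a=\Delta_{DS}(b)/\|\Delta_{DS}(b)\|_{\ell_2^d}$, I invoke Lemma \ref{stability-subspaces} in the $\ell_2^d$-norm (valid by the remark after the lemma) with $\lambda=g'(0)>0$, giving $\|a-\hat a\|_{\ell_2^d}\le 2\err(a,\hat a)/\|\Delta_{DS}(b)\|_{\ell_2^d}$. A reverse triangle inequality delivers $\|\Delta_{DS}(b)\|_{\ell_2^d}\ge g'(0)-\err(a,\hat a)$, and combining everything with the first inequality finishes the proof. The main technical obstacle I anticipate is the bookkeeping in the second paragraph: one must verify that the deterministic noise $e$ from the Taylor remainder actually satisfies the pair of bounds in Theorem \ref{random+deterministic} with a common scale $\eps$ so that the resulting error decomposes cleanly into the Dantzig-selector term (depending on $\sigma/h$) and the leftover $C_7 h/\sqrt s$ term that appears in \eqref{eq:noisehh}, while the factor $g'(0)$ gets absorbed into the generic constants in $\tilde R$.
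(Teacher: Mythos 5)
Your proposal is correct and follows essentially the same route as the paper: decompose $b=g'(0)\Phi a+e+z$ with the Taylor-remainder noise $e$ bounded in $\ell_2^m$ and $\ell_\infty^m$ via $\|a\|_{l_1^d}\le R$ and $\|\ph_j\|_{l_\infty^d}=1/\sqrt m$, apply Theorem \ref{random+deterministic} with $\eps\sim hR^2/\sqrt m$ and noise variance $\sigma^2/h^2$, then normalize via Lemma \ref{stability-subspaces} (in $\ell_2^d$) and a reverse triangle inequality, and finish with the two-term split for $\|f-\hat f\|_\infty$. Your first-inequality argument using $1-\langle a,\hat a\rangle=\tfrac12\|a-\hat a\|_{l_2^d}^2$ is a harmless variant of the paper's direct Cauchy--Schwarz bound $|\langle a,a-\hat a\rangle|\le\|a-\hat a\|_{l_2^d}$ and yields the same estimate.
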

\begin{proof}
To prove this theorem, we apply Theorem \ref{random+deterministic} to \eqref{eq:defb}. Therefore we need to estimate the norm of $e\in\R^m$, defined by \eqref{eq:noise1}.
We obtain
\begin{align*}
\|e\|_{l_2^m}^2&
=\sum\limits_{j=1}^m\left[\langle a,\ph_j\rangle(g'(\xi_{h,j})-g'(0))\right]^2
	\leq\sum\limits_{j=1}^m\left(c_1h\langle a,\ph_j\rangle^2\right)^2\\
	&\leq c_1^2h^2\sum\limits_{j=1}^m\left(\|a\|_{l_1^d}\|\ph_j\|_{l_\infty^d}\right)^4
	\leq \frac{c_1^2h^2R^4}{m}
\end{align*}
and similarly we can show 
\begin{align*}
\|e\|_{l_\infty^m}\leq\frac{c_1hR^2}{m}.
\end{align*}
We can now apply Theorem \ref{random+deterministic} with $\eps=h R^2/\sqrt{m}$ to get
\begin{align*}
	\|\Delta_{DS}(b)-g'(0)a\|_{l_2^d}&\leq
	\left(\min\limits_{1\leq s_*\leq s}2 C_5 \log d \left(s_*\frac{2\sigma^2}{h^2}+\tilde R^2 s_*^{-1}\right)\right)^{\frac 12} + C_7\frac{h}{\sqrt{s}}\\
	&=:\err(a,\hat a)
\end{align*}
with $\tilde R= 2(R+2C_6h)$ and some constants $C_5,C_6,C_7$. And since we know that $a$ is $l_2^d$-normalized we set
\begin{align*}
	\hat a:=\frac{\Delta_{DS}(b)}{\|\Delta_{DS}(b)\|_{l_2^d}}.
\end{align*}
Applying Lemma \ref{stability-subspaces} we get
\begin{align*}
	\|a-\hat a\|_{l_2^d}
	\leq\frac{2 \err(a,\hat a)}{\|\Delta_{DS}(b)\|_{l_2^d}}
	\leq\frac{2 \err(a,\hat a)}{g'(0)\|a\|_{l_2^m}-\err(a,\hat a)}
	=\frac{2 \err(a,\hat a)}{g'(0)-\err(a,\hat a)},
\end{align*}
where the last inequality only holds if the denominator is positive. This proves the second inequality in \eqref{eq:unestf}.


The proof of the first part of \eqref{eq:unestf} proceeds as in \cite{paper3}.
First we define an approximation $\hat g$ to $g$
\begin{align}\label{eq:noise3}
	\hat g\colon\R\to\R,~t&\mapsto f(t\hat a).
\end{align}
This is indeed a good approximation to $g$ as for any $t\in[-1,1]$ we get
\begin{align}
\notag\vert \hat g(t)-g(t)\vert&=\vert g(\langle a,t\cdot\hat a\rangle)-g(t)\vert \leq c_0 \left\vert t \left(1-\langle a,\hat a\rangle\right)\right\vert=c_0 \vert t\vert\cdot\left\vert\langle a,a-\hat a\rangle\right\vert\\
\label{eq:noise2}&\leq c_0\cdot\|a-\hat a\|_{l_2^d}.
\end{align}
With this approximation $\hat g$ to $g$ we define the function $\hat f$ by
\begin{align*}
	\hat f\colon B^d\to\R,~x&\mapsto\hat g(\langle \hat a,x\rangle).
\end{align*}
It remains to show that $\hat f$ is a good approximation to $f$. With the help of \eqref{eq:noise3} and \eqref{eq:noise2} we obtain
\begin{align*}
	\vert f(x)-\hat f(x)\vert
	&=\vert g(\langle a,x\rangle)-\hat g(\langle\hat a,x\rangle)\vert\\
	&\leq \vert g(\langle\hat a,x\rangle)-\hat g(\langle\hat a,x\rangle)\vert+\vert g(\langle a,x\rangle)-g(\langle\hat a,x\rangle)\vert\\
	&\leq c_0\cdot\|a-\hat a\|_{l_2^d}+c_0\vert\langle a-\hat a,x\rangle\vert\\
	&\leq 2c_0\|a-\hat a\|_{l_2^d}
\end{align*}
for all $x\in B^d.$
\end{proof}

\section{Approximation of translated radial functions}

The methods we presented so far, as well as the methods of \cite{paper3}, were developed in the (quite restrictive) frame
of ridge functions. As an example of a possible extension of these algorithms, we consider the class of translated radial functions, i.e. functions of the form
\begin{align*}
	f\colon B^d\to\R,~x\mapsto f(x)=g(\|a-x\|_{l_2^d}^2)
\end{align*}
for some fixed $l_2^d$-normalized vector $a\in\R^d$
\begin{equation}\label{eq:anorm}
\|a\|_{l_2^d}=1
\end{equation}
and a function $g\colon[0,4]\to\R$. Hence, $f$ is constant on the spheres centered in $a$ or, equivalently,
it is a radial function translated by $a$.
Typically, we shall again assume that $g$ and $g'$ are Lipschitz continuous with constants $c_0$ and $c_1$, respectively. 

The idea to recover those functions is similar to the case of ridge functions. First we recover the center $a$ 
and then we define approximations $\hat g$ to $g$ and $\hat f$ to $f$.
\p
For a small constant $h>0$ and fixed vectors $x_i\in\R^d, i=1,\ldots,d$ we set
\begin{align*}
	\tilde a_i:=\frac{f(he_i+x_i)-f(x_i)}{h},
\end{align*}
where $e_1,\dots,e_d$ are again the canonical basis vectors of $\R^d$.
With help of the mean value theorem we can express this as
\begin{align*}
	\tilde a_i&=\frac{f(he_i+x_i)-f(x_i)}{h}
	=\frac{g(\|a-he_i-x_i\|_{l_2^d}^2)-g(\|a-x_i\|_{l_2^d}^2)}{h}\\
	&=g'(\xi_{h,i})\frac{\|a-he_i-x_i\|_{l_2^d}^2-\|a-x_i\|_{l_2^d}^2}{h}
\end{align*}
for some real $\xi_{h,i}$ between $\|a-he_i-x_i\|_{l_2^d}^2$ and $\|a-x_i\|_{l_2^d}^2$. The nominator can be simplified by
\begin{align*}
	\|a-he_i-x_i\|_{l_2^d}^2-\|a-x_i\|_{l_2^d}^2
	&=\langle a-he_i-x_i,a-he_i-x_i\rangle-\langle a-x_i,a-x_i\rangle\\
	&=-2h\langle a,e_i\rangle+h^2\langle e_i,e_i\rangle+2h\langle e_i,x_i\rangle\\
	&=-2ha_i+h^2+2h x_{i,i}.
\end{align*}
Let us choose $x_i=-(h/2)e_i$ to get
\begin{align*}
	\tilde a_i
	&=\frac{f((h/2)e_i)-f(-(h/2)e_i)}{h}\\
	&=-2g'(\xi_{h,i})a_i
\end{align*}
for some $\xi_{h,i}$ between $\|a-(h/2)e_i\|_{l_2^d}^2$ and $\|a+(h/2)e_i\|_{l_2^d}^2$.
Next let us note that $\xi_{h,i}$ is very close to $1=\|a\|_{l_2^d}^2$:
\begin{align}
\notag\left\vert\xi_{h,i}-1\right\vert
	&\le \max\left\{\left\vert 1-\|a-(h/2)e_i\|_{l_2^d}^2\right\vert~,~\left\vert1-\|a+(h/2)e_i\|_{l_2^d}^2\right\vert\right\}\\
\label{eq:xi}&=\max\left\{\left\vert -ha_i-h^2/4\right\vert~,~\left\vert ha_i-h^2/4\right\vert\right\}\\
\notag&\leq h+h^2/4.
\end{align}
Finally we obtain that $\hat a$ is a good approximation to $-2g'(\|a\|_{l_2^d}^2) a=-2g'(1)a$, since
\begin{align*}
	\|\tilde a+2g'(1)a\|_{l_2^d}^2
	&=\sum\limits_{i=1}^d\left(-2g'(\xi_{h,i})a_i+2g'(1)a_i\right)^2\\
	&=4\sum\limits_{i=1}^d\left(\left(g'(\xi_{h,i})-g'(1)\right)a_i\right)^2
	\leq 4\sum\limits_{i=1}^d\left(c_1\left\vert \xi_{h,i}-1\right\vert a_i\right)^2\\
	&\leq 4c_1^2\sum\limits_{i=1}^d\left(\left(h+h^2/4\right) a_i\right)^2
	= 4c_1^2\left(h+h^2/4\right)^2\sum\limits_{i=1}^d a_i^2\\
	&=4c_1^2\left(h+h^2/4\right)^2.
\end{align*}
Thus $\tilde a$ is almost a multiple of $a$. Again, we need to assume that the derivative of $g'$ is non-trivial in some sense.
Due to the construction, we replace \eqref{eq:assum4} by the condition
\[
g'(1)\not =0.
\]
Then the $l_2^d$-normalized vector
\begin{align*}
\hat a:=\frac{\tilde a}{\|\tilde a\|_{l_2^d}}
\end{align*}
approximates $a$, possibly up to a sign. Choosing any vector $\hat a^\perp\in\R^d$ orthogonal to $\hat a$, we can identify the sign by sampling along $\hat a^\perp.$
Afterwards, the correct sign might be assigned to $\hat a.$ We will therefore restrict ourselves to the case
\begin{equation}\label{eq:g'1}
g'(1)>0.
\end{equation}

Once an approximation of $a$ was recovered, it is again easy to define an approximation of $g$, and finally of $f$. We summarize this procedure as the following algorithm.
\vskip.2cm\noindent
\framebox[\textwidth][s]
{
\begin{minipage}{0.9\textwidth}
\vskip.2cm\centerline{{\bf Algorithm D}}\vskip-.10cm
\noindent\makebox[\linewidth]{\rule{10cm}{0.6pt}}

\begin{itemize}[leftmargin=*]
\item \emph{Input:} Translated radial function $f\colon B^d\to\R$ with $f(x)=g(\|a-x\|_{l_2^d}^2)$, $a$ and $g$ with \eqref{eq:anorm}, \eqref{eq:assum2}, \eqref{eq:assum3} and \eqref{eq:g'1}
\item Put $\tilde a_i:=\frac{f(he_i/2)-f(-he_i/2)}{h}$, $i=1,\dots,d$
\item Put $\hat a:=\frac{\tilde a}{\|\tilde a\|_{l_2^d}}$
\item Put $\hat g\colon[0,4]\to\R,~t\mapsto f(\hat a(1-\sqrt{t}))$
\item Put $\hat f\colon B^d\to\R,~x\mapsto\hat g(\|\hat a-x\|_{l_2^d}^2)$
\item \emph{Output:} $\hat f$
\end{itemize}
\vskip.2cm
\end{minipage}
}
\vskip.2cm
The performance of Algorithm D is estimated by the following Theorem.
\begin{theo}\label{thm:sing}
Let $f\colon B^d\to\R$, $g\colon[0,4]\to\R$ and $a\in\R^d$ be such that $f(x)=g(\|a-x\|_{l_2^d}^2)$ and $a$ and 
and $g$ satisfy \eqref{eq:anorm},\eqref{eq:assum2}, \eqref{eq:assum3} and \eqref{eq:g'1}.
Then 
\begin{align}
\label{eq:ffhat}\|f-\hat f\|_{\infty}&\leq c_0\left(2\|\hat a- a\|_{l_2^d}+\|\hat a- a\|_{l_2^d}^2\right)\\
\intertext{and}
\label{eq:aahat}\|\hat a- a\|_{l_2^d}&\le \frac{2 c_1\left(h+h^2/4\right)}{g'(1) -c_1(h+h^2/4)}
\end{align}
if $g'(1) -c_1(h+h^2/4)$ is positive.
\end{theo}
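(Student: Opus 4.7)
The plan is to establish the two inequalities in the order they are stated, relying heavily on the preparatory computation done in the text, which already gives $\|\tilde a + 2g'(1)a\|_{l_2^d} \leq 2c_1(h+h^2/4)$.

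For \eqref{eq:aahat}, I would first bound $\|\tilde a\|_{l_2^d}$ from below by the reverse triangle inequality:
\[
\|\tilde a\|_{l_2^d} \geq \|2g'(1)a\|_{l_2^d} - \|\tilde a + 2g'(1)a\|_{l_2^d} \geq 2\bigl(g'(1) - c_1(h+h^2/4)\bigr),
\]
which is positive by assumption. Then I would apply Lemma \ref{stability-subspaces}, which by the remark following it extends verbatim to the $\ell_2^d$-norm, with $x=a$, $\tilde x = \tilde a$, and $\lambda = -2g'(1)$. Since $g'(1) > 0$ by \eqref{eq:g'1}, we have $\sgn(\lambda) = -1$, and the lemma yields
\[
\left\|-\frac{\tilde a}{\|\tilde a\|_{l_2^d}} - a\right\|_{l_2^d} \leq \frac{2\,\|\tilde a + 2g'(1)a\|_{l_2^d}}{\|\tilde a\|_{l_2^d}} \leq \frac{2c_1(h+h^2/4)}{g'(1) - c_1(h+h^2/4)}.
\]
Identifying $\hat a$ with the correct sign choice (as anticipated by the ``possibly up to a sign'' discussion preceding the algorithm, which is why we restricted to $g'(1) > 0$) gives \eqref{eq:aahat}.

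For \eqref{eq:ffhat}, the crucial step is to show that $\hat g$ approximates $g$ on $[0,4]$. A direct expansion using $\|a\|_{l_2^d} = \|\hat a\|_{l_2^d} = 1$ gives the identity
\[
\|a - \hat a(1-\sqrt t)\|_{l_2^d}^2 - t = 2(1-\sqrt t)(1 - \langle a, \hat a\rangle) = (1-\sqrt t)\,\|a - \hat a\|_{l_2^d}^2,
\]
and since $|1-\sqrt t| \leq 1$ on $[0,4]$, the Lipschitz assumption \eqref{eq:assum2} yields $|\hat g(t) - g(t)| \leq c_0 \|a - \hat a\|_{l_2^d}^2$. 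Then for any $x \in B^d$ I would split
\[
|f(x) - \hat f(x)| \leq \bigl|g(\|a-x\|^2) - g(\|\hat a-x\|^2)\bigr| + \bigl|g(\|\hat a-x\|^2) - \hat g(\|\hat a-x\|^2)\bigr|,
\]
bound the first term via $\|a-x\|_{l_2^d}^2 - \|\hat a-x\|_{l_2^d}^2 = -2\langle a - \hat a, x\rangle$ (again using unit normalization) together with Cauchy--Schwarz and $\|x\|_{l_2^d} \leq 1$, and the second by the estimate on $\hat g - g$ just obtained.

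No genuine obstacle is anticipated; the argument is essentially a careful bookkeeping parallel to Theorem \ref{thm:AlgA}. The only subtle points are the sign ambiguity already flagged above (handled by feeding $\lambda < 0$ into Lemma \ref{stability-subspaces}) and the algebraic identity for $\|a - \hat a(1-\sqrt t)\|_{l_2^d}^2 - t$, which is what makes the particular parametrization $\hat g(t) = f(\hat a(1-\sqrt t))$ natural.
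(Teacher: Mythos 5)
Your proposal is correct and takes essentially the same route as the paper's proof: the precomputed bound $\|\tilde a+2g'(1)a\|_{l_2^d}\le 2c_1(h+h^2/4)$, Lemma \ref{stability-subspaces} in the $\ell_2^d$-norm with $\lambda=-2g'(1)$, the identity $\|a-\hat a(1-\sqrt{t})\|_{l_2^d}^2-t=(1-\sqrt{t})\|a-\hat a\|_{l_2^d}^2$ with $|1-\sqrt{t}|\le 1$ on $[0,4]$, and the same triangle-inequality splitting for $|f(x)-\hat f(x)|$. The only cosmetic deviations are that you lower-bound $\|\tilde a\|_{l_2^d}$ by the reverse triangle inequality while the paper uses $\tilde a_i=-2g'(\xi_{h,i})a_i$ together with $|g'(\xi_{h,i})|\ge g'(1)-c_1(h+h^2/4)$ componentwise (both give $\|\tilde a\|_{l_2^d}\ge 2(g'(1)-c_1(h+h^2/4))$), and your explicit handling of the sign through $\sgn(\lambda)=-1$ is, if anything, more careful than the paper's implicit identification of $\hat a$ with the sign-corrected normalization.
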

\begin{proof}
First, we estimate the difference between $a$ and $\hat a$. By \eqref{eq:xi} and \eqref{eq:assum3}
\begin{align*}
	g'(1)-\vert g'(\xi_{h,i})\vert
	\leq \vert g'(1)-g'(\xi_{h,i})\vert
	\leq c_1\vert 1-\xi_{h,i}\vert
	\leq c_1(h+h^2/4),
\end{align*}
hence
\begin{align}\label{eq:xi2}
\vert g'(\xi_{h,i})\vert
	\geq g'(1) -c_1(h+h^2/4).
\end{align}
Therefore, if the right hand side of \eqref{eq:xi2} is positive, we get
\begin{align*}
	\|\tilde a\|_{l_2^d}^2
	&=\sum\limits_{i=1}^d\vert\tilde a_i\vert^2
	=4\sum\limits_{i=1}^d\vert g'(\xi_{h,i})a_i\vert^2\\
	&\geq 4\sum\limits_{i=1}^d\left( g'(1) -c_1(h+h^2/4)\right)^2a_i^2\\
	&=4\left(g'(1) -c_1(h+h^2/4)\right)^2.
\end{align*}
Now we apply Lemma \ref{stability-subspaces} to obtain
\begin{align}\label{eq:sing2}
\|\hat a-a\|_{l_2^d}
\leq\frac{4c_1(h+h^2/4)}{\|\tilde a\|_{l_2^d}}
\leq\frac{2c_1(h+h^2/4)}{g'(1) -c_1(h+h^2/4)}.
\end{align}
Given the approximation $\hat a$ to $a$ we define an approximation $\hat g$ to $g$ by
\begin{align*}
\hat g\colon[0,4]\to\R,~t\mapsto f(\hat a(1-\sqrt{t})).
\end{align*}
Essentially, $\hat g$ is the restriction of $f$ onto $\{\lambda\hat a:\lambda\in\R\}\cap B^d$. Using \eqref{eq:assum2} we obtain the estimate
\begin{align}
\notag \vert g(t)-\hat g(t)\vert
&=\left\vert g(t)-g(\|a-\hat a+\sqrt{t}\hat a\|_{l_2^d}^2)\right\vert\leq c_0\left\vert t-\|a-\hat a+\sqrt{t}\hat a\|_{l_2^d}^2\right\vert\\
\notag &=c_0\left\vert 2\sqrt{t}\langle a-\hat a,\hat a\rangle+\|a-\hat a\|_{l_2^d}^2\right\vert=c_0\left\vert 2(1-\sqrt{t})(1-\langle a,\hat a\rangle) \right\vert\\
\label{eq:sing1}&=c_0\,\left\vert 1-\sqrt{t}\right\vert \cdot \|a-\hat a\|_{l_2^d}^2\le c_0\,\|a-\hat a\|_{l_2^d}^2 
\end{align}
for all $t\in[0,4]$. Next we define
\begin{align*}
	\hat f\colon B^d\to\R,~x\mapsto \hat g(\|\hat a-x\|_{l_2^d}^2).
\end{align*}
With \eqref{eq:sing2} and \eqref{eq:sing1} we get the final estimate
\begin{align*}
	\vert f(x)-\hat f(x)\vert
	&=\left\vert g(\|a-x\|_{l_2^d}^2)-\hat g(\|\hat a-x\|_{l_2^d}^2)\right\vert\\
	&\leq \left\vert g(\|a-x\|_{l_2^d}^2)-g(\|\hat a-x\|_{l_2^d}^2)\right\vert+\left\vert g(\|\hat a-x\|_{l_2^d}^2)-\hat g(\|\hat a-x\|_{l_2^d}^2)\right\vert\\
	&\leq c_0\left\vert \|a-x\|_{l_2^d}^2-\|\hat a-x\|_{l_2^d}^2\right\vert+c_0 \, \|a-\hat a\|_{l_2^d}^2\\
	&=2c_0\left\vert \langle a-\hat a,x\rangle\right\vert +c_0\, \|a-\hat a\|_{l_2^d}^2\\
	&\leq c_0\left(2\|a-\hat a\|_{l_2^d}+\|a-\hat a\|_{l_2^d}^2\right).
\end{align*}
\end{proof}
\begin{remark}\label{rem:thm:sing} (Extensions of Theorem \ref{thm:sing})
\begin{enumerate}
\item[(i)] We assumed in Theorem \ref{thm:sing}, that the function $g$ and its derivative $g'$ are both Lipschitz.
If we assume this property only on the interval $(1-(h+h^2/4),1+(h+h^2/4))$,
we can still recover at least \eqref{eq:sing2}. This applies even to the case, when $g$ (and also its derivative) are unbounded near the origin.
In that case, we can still  approximate the position of the singularity, although uniform approximation of $f$ is out of reach.
\item[(ii)]	As in the approximation scheme for ridge functions we can use techniques from compressed sensing to recover $f$ if $a$ is compressible.
To be more precise, if $a$ satisfies $\|a\|_{l_1^d}\leq R$ and $\Phi\in\R^{m\times d}$ 
is a normalized Bernoulli matrix with rows $\ph_1,\ldots,\ph_m\in\R^d$ we define
\begin{align*}
\tilde b_j&:=\frac{f((h/2)\ph_j)-f(-(h/2)\ph_j)}{h}.
\end{align*}
As $f$ is defined only on the unit ball $B^d$ and $\|\ph_j\|_{l_2^d}=\sqrt{d/m}$, we must always have at least
$h\le 2\sqrt{m/d}$ to ensure that $(h/2)\ph_j\in B^d.$ To allow for comparison with the non-compressible case just discussed in Theorem \ref{thm:sing}, we denote
\[
\tilde h=h/2\cdot\sqrt{d/m},
\]
which leads to
\begin{align}\label{eq:b}
\tilde b_j&=\frac{\displaystyle f\Bigl(\tilde h\frac{\ph_j}{\|\ph_j\|_{l_2^d}}\Bigr)-f\Bigl(-\tilde h\frac{\ph_j}{\|\ph_j\|_{l_2^d}}\Bigr)}{h}.
\end{align}

By defining the deterministic noise $e\in\R^m$
\begin{align}\label{eq:b2}
	\tilde b=-2g'(1)\Phi a+e
\end{align}
we can show with similar calculations as before that
\begin{align}\label{eq:singcs1}
\|e\|_{l_2^m}&\leq\eta:=2c_1R\left(\frac{2R\tilde h}{\sqrt{d}}+{\tilde h}^2\right).
\end{align}
Using the $(P_{1,\eta})$ minimizer of \cite{C} we put
\[
\tilde a=\argmin_{z\in\R^d}\|z\|_{l_1^d}\quad \text{s.t.}\quad \|\Phi z-\tilde b\|_{l_2^m}\le \eta
\]
with $\eta$ given by the right hand side of \eqref{eq:singcs1}.  We then get the estimate, cf. \cite[Theorem 4.22]{paper7} or \cite[Theorem 1.6]{BCKV},
\[
\|\tilde a-2g'(1)a\|_{l_2^d}\le \varrho:=\frac{C\sigma_s(2g'(1)a)_1}{\sqrt{s}}+D\eta
\]
with two universal constants $C,D>0.$ Here again $s\leq C_2m/\log(d/m)$.
Lemma \ref{stability-subspaces} (with $l_2^d$ instead of $l_1^d$) gives for $\hat a=\tilde a/\|\tilde a\|_{l_2^d}$
\[
\|\hat a-a\|_{l_2^d}\le 2\varrho/\|\tilde a\|_{l_2^d}.
\]
Finally, using
\[
\|\tilde a\|_{l_2^d}\ge 2g'(1)\|a\|_{l_2^d}-\|\tilde a-2g'(1)a\|_{l_2^d}\ge 2g'(1)-\varrho,
\]
we get
\[
\|\hat a-a\|_{l_2^d}\le \frac{2\varrho}{2g'(1)-\varrho}
\]
if $2g'(1)>\varrho.$

This gives a replacement of \eqref{eq:sing2}, the rest of the proof of Theorem \ref{thm:sing} then applies without further modifications.
\item[(iii)] Once we have this approximation scheme using techniques from compressed sensing, we can easily extend it to an approximation scheme with noisy measurements.
We assume again that $\tilde b$ from \eqref{eq:b} is corrupted by noise $z/h$, where the components of $z=(z_1,\dots,z_m)^T$ are again independent ${\mathcal N}(0,\sigma^2)$
distributed random variables. Formula \eqref{eq:b2} is then replaced by $\tilde b=-2g'(1)\Phi a+e+z/h$ and Dantzig selector can be applied.
\end{enumerate}
\end{remark}

\section{Numerical results}

In this section we investigate the performance of the algorithms presented so far in several model situations. The results shed a new light on some of the aspects, which we did not discuss in detail, especially on the size
of the constants used in previous theorems. All the approximation schemes started by looking for a good approximation $\hat a$ of the unknown direction $a$ and, consequently, the quality of the uniform approximation
of $f$ by $\hat f$ was then bounded by the corresponding distance between $\hat a$ and $a$. In what follows, we will therefore discuss only the approximation error between $a$ and $\hat a$.

\subsection{Ridge functions on cubes}

\begin{figure}
     \centering
     \includegraphics[width=6.1cm]{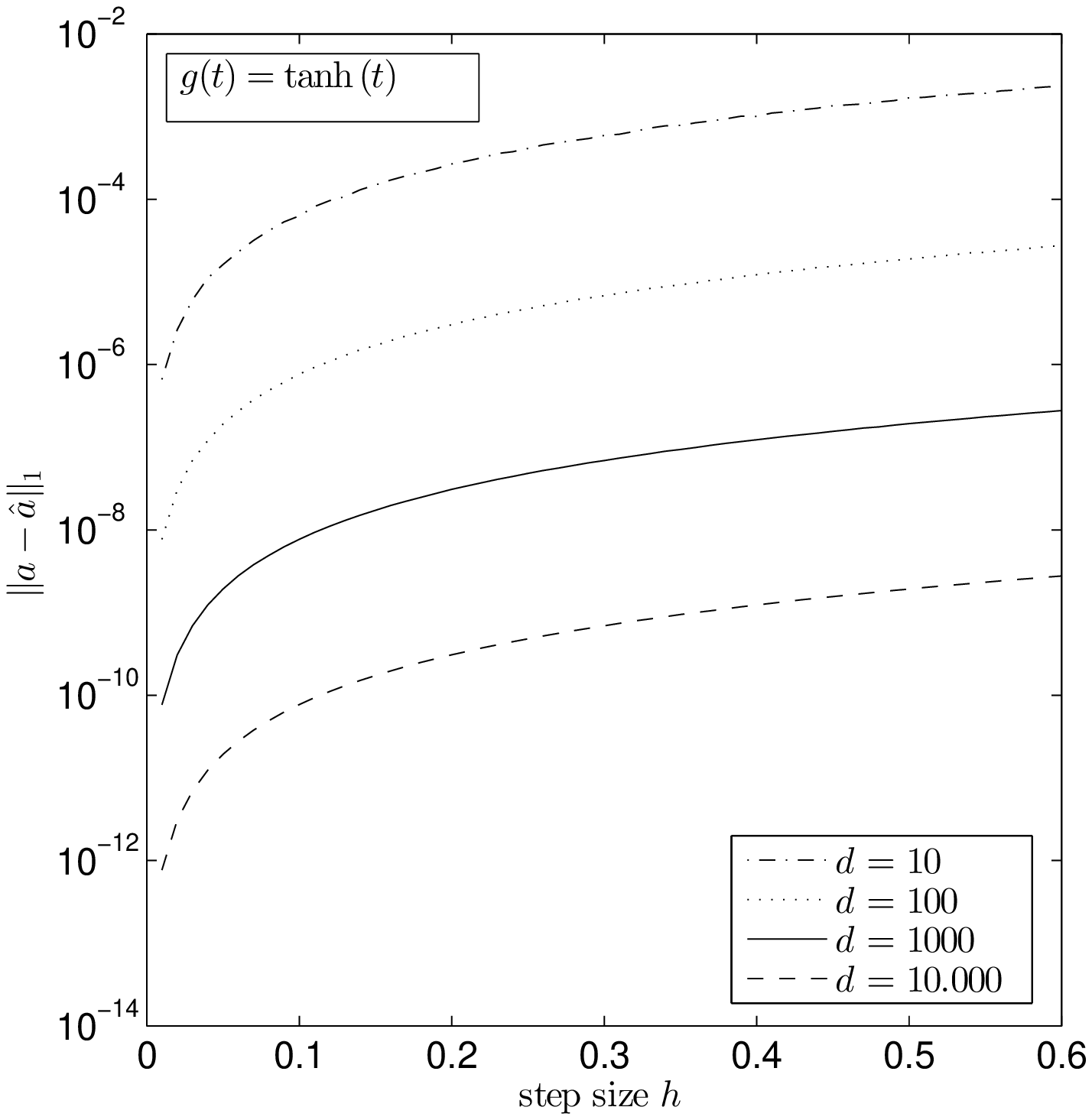}\quad
      \includegraphics[width=6.1cm]{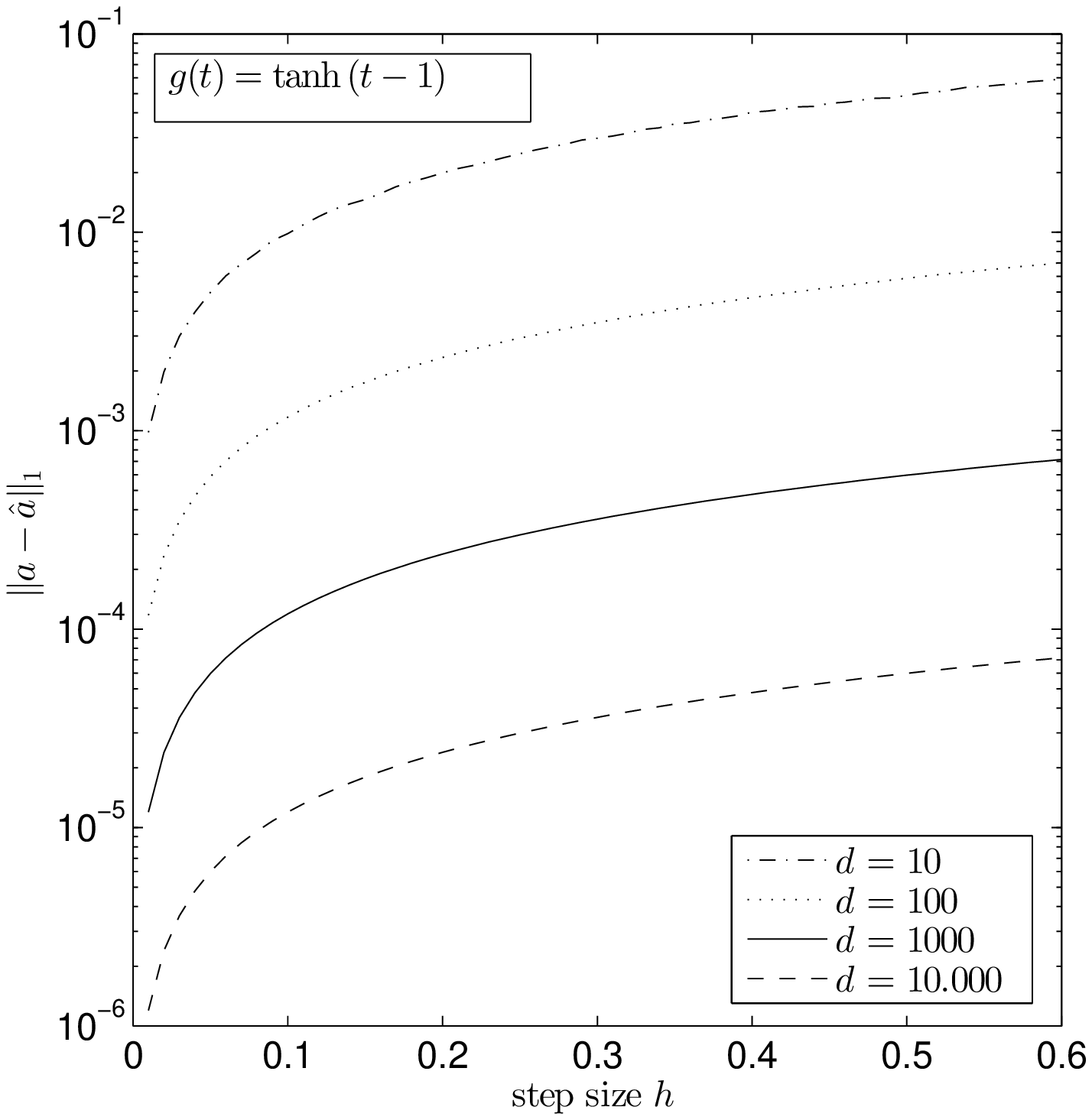}
     \caption{Approximation of $a$ according to Algorithm A with $g(t)=\tanh(t)$ (left) and $g(t)=\tanh(t-1)$ (right).}
     \label{fig:approx-A}
\end{figure}


We start with Algorithm A, i.e. with approximation of a  ridge function $f(x)=g(\langle a,x\rangle)$ defined on the cube $[-1,1]^d$ with $\|a\|_{\ell_1^d}=1.$
We have considered different dimensions ($d\in\{10,100,1000,10.000\}$).
As the Algorithm A does not make any use of sparsity of $a$, it is reasonable to assume, that all its coordinates are equally likely to be non-zero.
The entries of $a$ were therefore always independently normally distributed (i.e. $a_i\sim\mathcal{N}(0,1)$), afterwards $a$ got $l_1^d$-normalized according to \eqref{eq:assum1}.

Figure \ref{fig:approx-A} shows the (average) approximation error $\|a-\hat a\|_{\ell_1^d}$ in dependence of the step size $h>0$
for two different profiles $g(t)=\tanh(t)$ and $g(t)=\tanh(t-1)$. Note that the $y$-axis scales logarithmically.
Let us give some remarks on Figure \ref{fig:approx-A}.
\begin{itemize}
\item The approximation improves rapidly with growing dimension. This is given by considering the non-sparse ridge vectors $a$ and by the concentration of measure phenomenon
as described also in Remark \ref{rem:3.4}.
\item Smaller step size $h$ implies also better quality of approximation, but already reasonable sizes of $h$ (i.e. $h=0.2$) imply relatively very small errors.
\item Finally, the second derivative of the first profile at zero vanishes, were it is non-zero for the second profile. Therefore, the first order differences approximate
the first order derivative less accurately in that case, leading to larger (but still surprisingly small) approximation errors.
\end{itemize}


\begin{figure}[h]
	\centering
	\includegraphics[width=6.1cm]{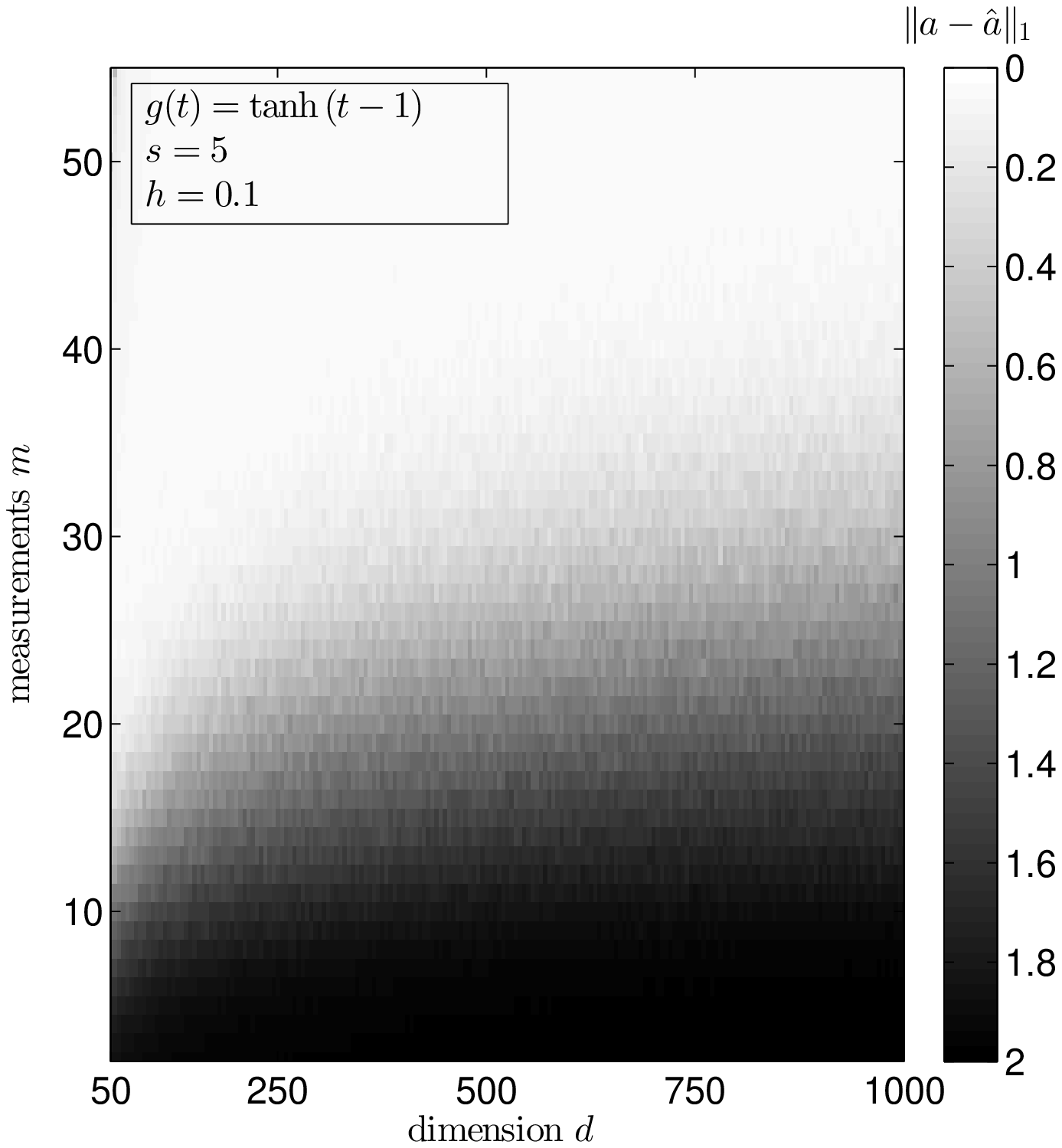}\includegraphics[width=6.1cm]{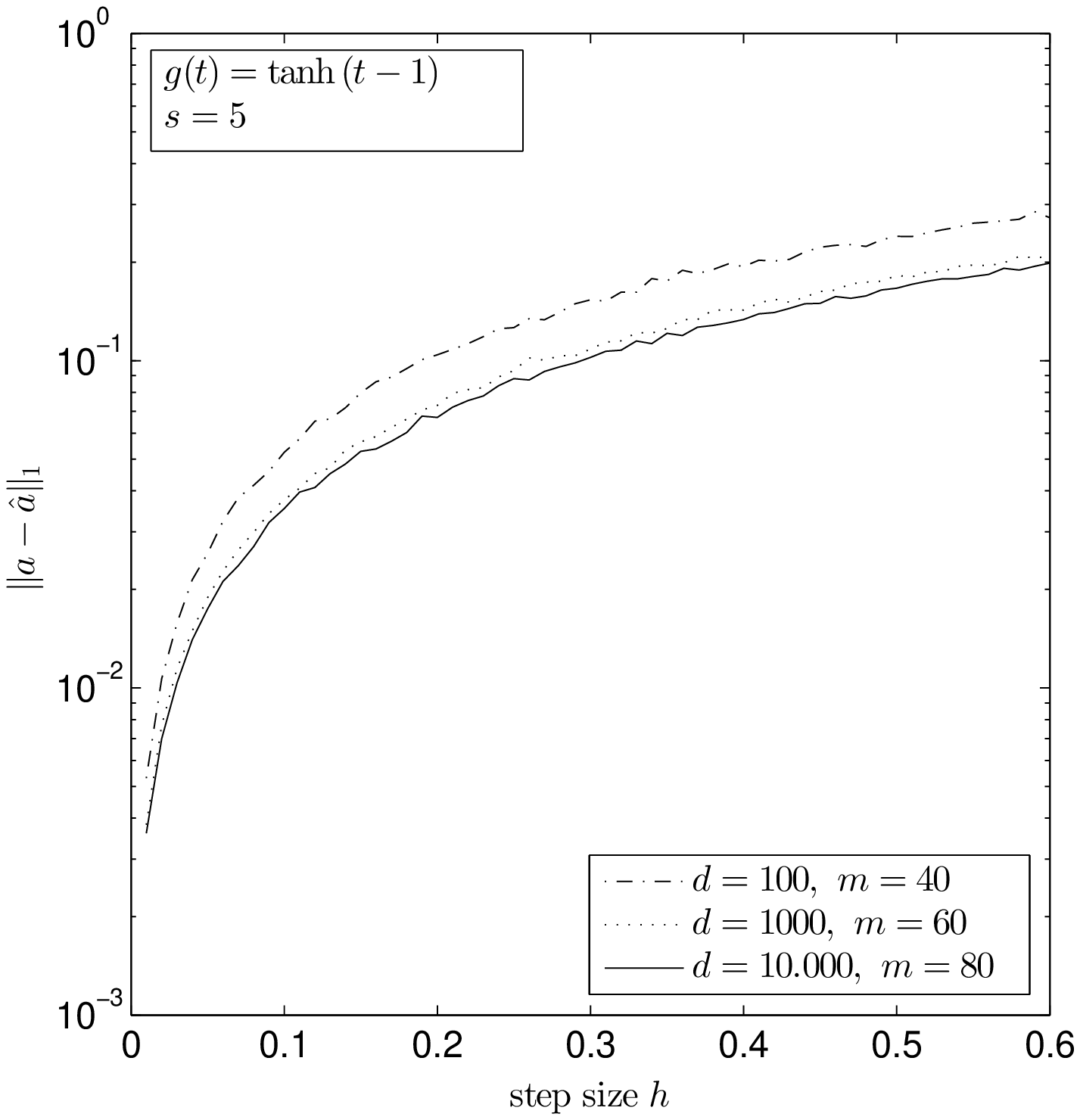}
	\caption{Face transition for the approximation of $a$ and the average error of $\|a-\hat a\|_{\ell_1^d}$ according to algorithm B.}
	\label{fig:approx-B}
\end{figure}

The left part of Figure \ref{fig:approx-B} shows the dependence of the number of the sampling points $m$ on the dimension $d$ and sparsity $s$, cf. \eqref{eq:setting}, when using Algorithm B.
We fixed the ridge profile $g(t)=\tanh(t-1)$, the sparsity $s=5$ and the step size $h=0.1$ and constructed an $s$-sparse random vector $a$ by {\sc Matlab} command {\tt sprandn}, followed by the $\ell_1^d$-normalization.
For each integer $d$ between $50$ and $1000$ and for each integer $m$ between $1$ and $55$, we then run the Algorithm B 120 times and the average approximation error $\|a-\hat a\|_{\ell_1^d}$
corresponds afterwards to the shade of grey of the point with coordinates $d$ and $m$. In accordance with the theory of compressed sensing (and with Remark \ref{rem:AlgB}), we observe that
the number of measurements needs to grow only logarithmically in the dimension $d$ to guarantee good approximation with high probability. The right part of Figure \ref{fig:approx-B}
then shows the average value of $\|a-\hat a\|_{\ell_1^d}$ for the same profile and sparsity for three different pairs of $(d,m)$. We observe, that especially for large dimensions even extremely small number of measurements guarantees
already reasonable approximation errors.

\subsection{Noisy measurements}

\begin{figure}
	\centering
	\includegraphics[width=6cm]{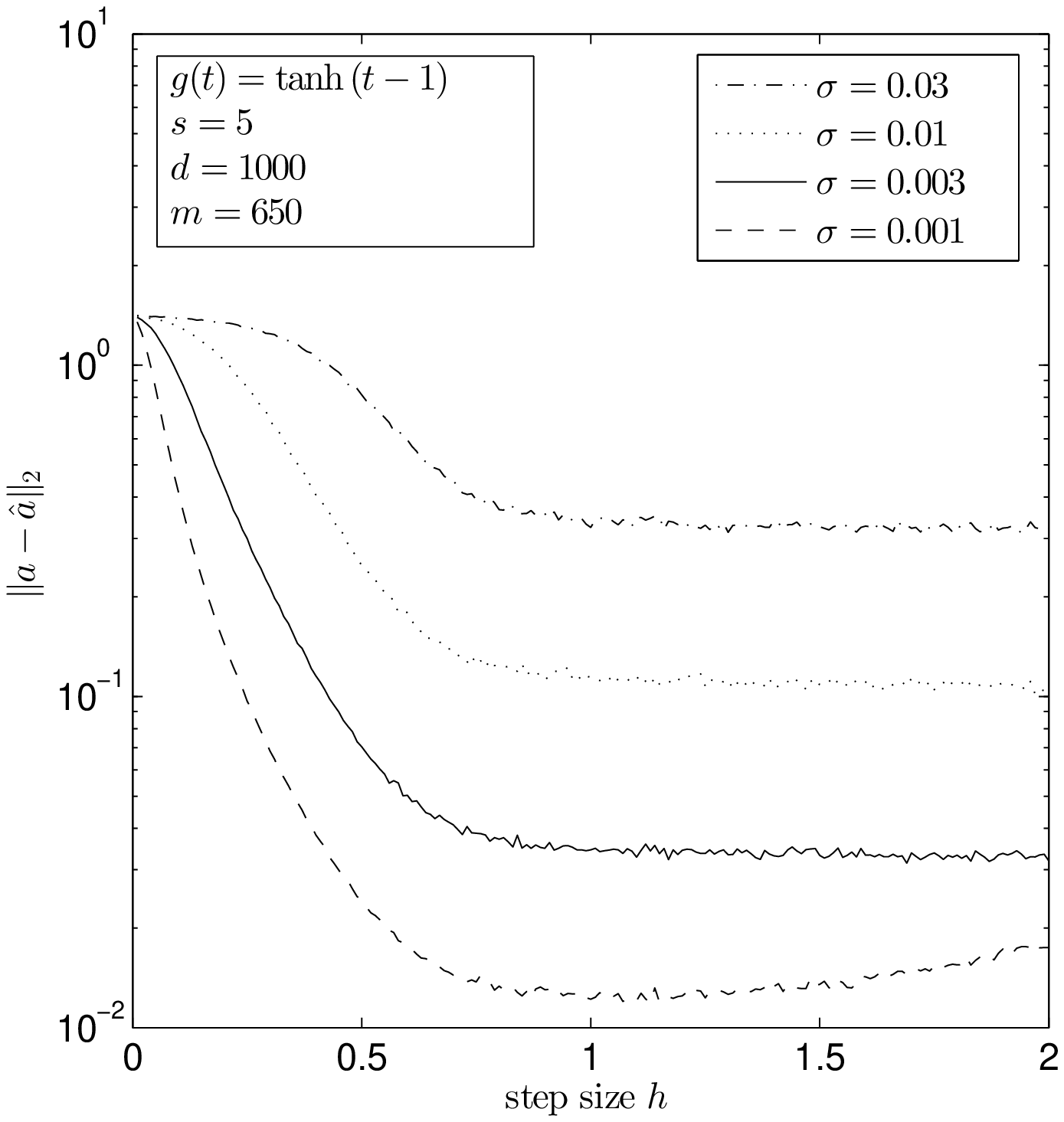}\quad
	\includegraphics[width=6cm]{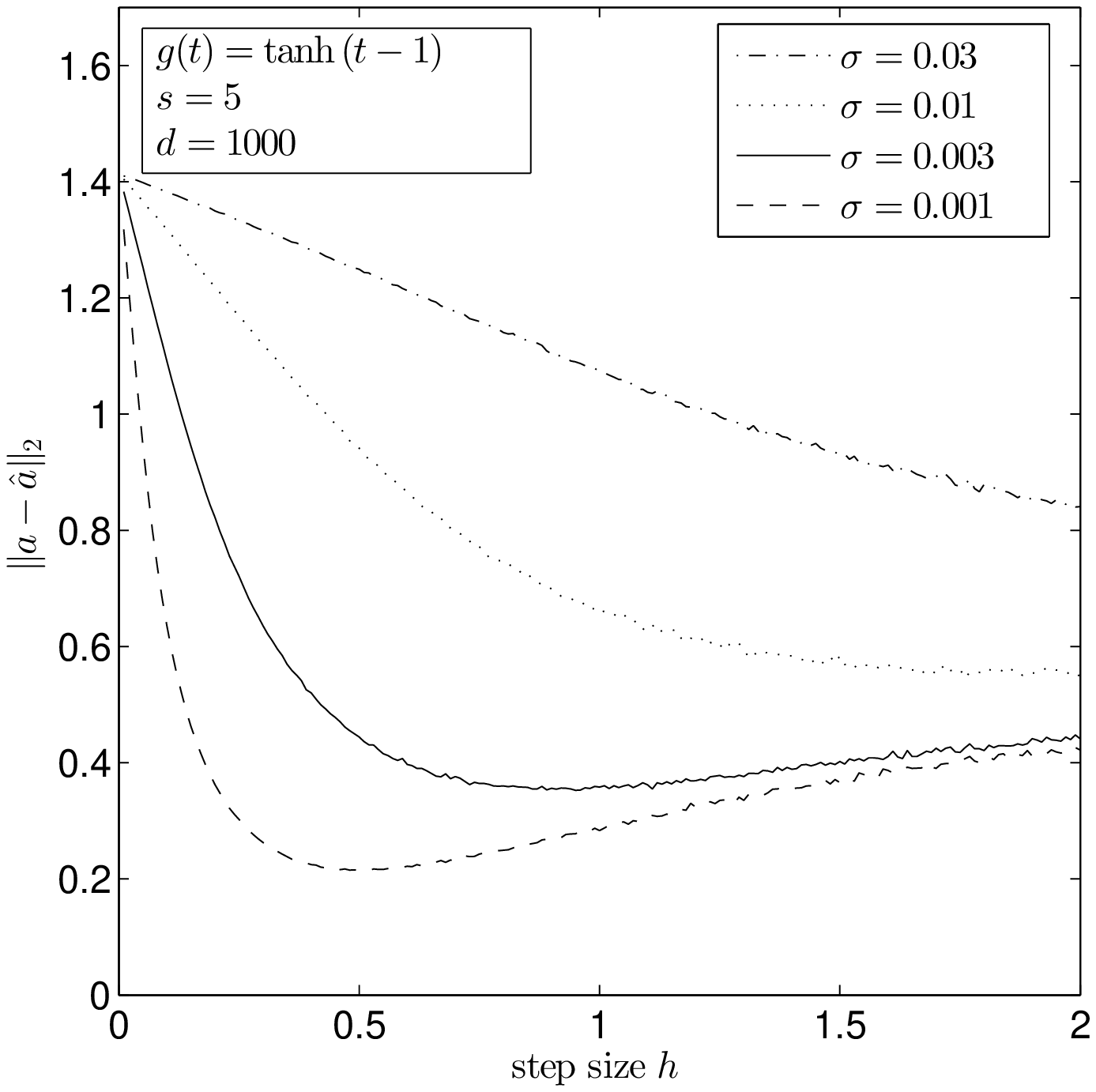}
	\caption{Approximation of $a$ with noisy measurements according to Algorithm C (left) and a modification of Algorithm A (right). Note, that only the $y$-axis of the left plot is logarithmic.}
	\label{fig:approx-C}
\end{figure}

Figure \ref{fig:approx-C} studies the performance of the recovery of the ridge vector $a$ from noisy measurements as described in Algorithm C.
We fixed the parameters $d=1000$, $m=400$, and $s=5$, the ridge profile $g(t)=\tanh(t-1)$ and four different noise levels $\sigma\in\{0.03, 0.01, 0.003, 0.001\}$.
We have used the $\ell_1$-MAGIC implementation of Dantzig selector, available at the web page of Justin Romberg at {\tt http://users.ece.gatech.edu/\textasciitilde justin/l1magic/}.
As the noise level gets amplified by the factor $1/h$, when taking the first order differences, cf. \eqref{eq:noiseh}, it is not surprising that the recovery fails
completely for small values of $h$. On the other hand, for large values of $h$, the correspondence between first order differences and first order derivatives
gets weaker and the quality of approximation deteriorates as well. This effect is clearly visible from \eqref{eq:noisehh} and, numerically, in the left part of Figure \ref{fig:approx-C},
where there is an optimal $h$ for the recovery of $a$. Strictly speaking, the functions considered in Section \ref{stability-ridge} were defined only on the unit ball $B^d$,
so that the value of $h$ in Figure \ref{fig:approx-C} should be limited to be smaller than $\sqrt{m/d}$. We have decided to include also larger values $h$ to exhibit
the optimal $h$, although for our profile and our parameters it lies outside of this interval.

Although not discussed before, it is quite straightforward to modify the non-probabilistic Algorithm A also to the case of noisy measurements. Essentially, the gradient $\nabla f(0)$
is then approximated by the first-order differences, this time corrupted by noise. We applied this approach to the profile $g(t)=\tanh(t-1)$ and parameters just described with the
results plotted in the right part of Figure \ref{fig:approx-C}. We observe that the approximation errors get much larger, demonstrating once again the success of Dantzig selector.

\subsection{Shifted radial functions}

\begin{figure}
\centering
\includegraphics[width=6.1cm, height=6cm]{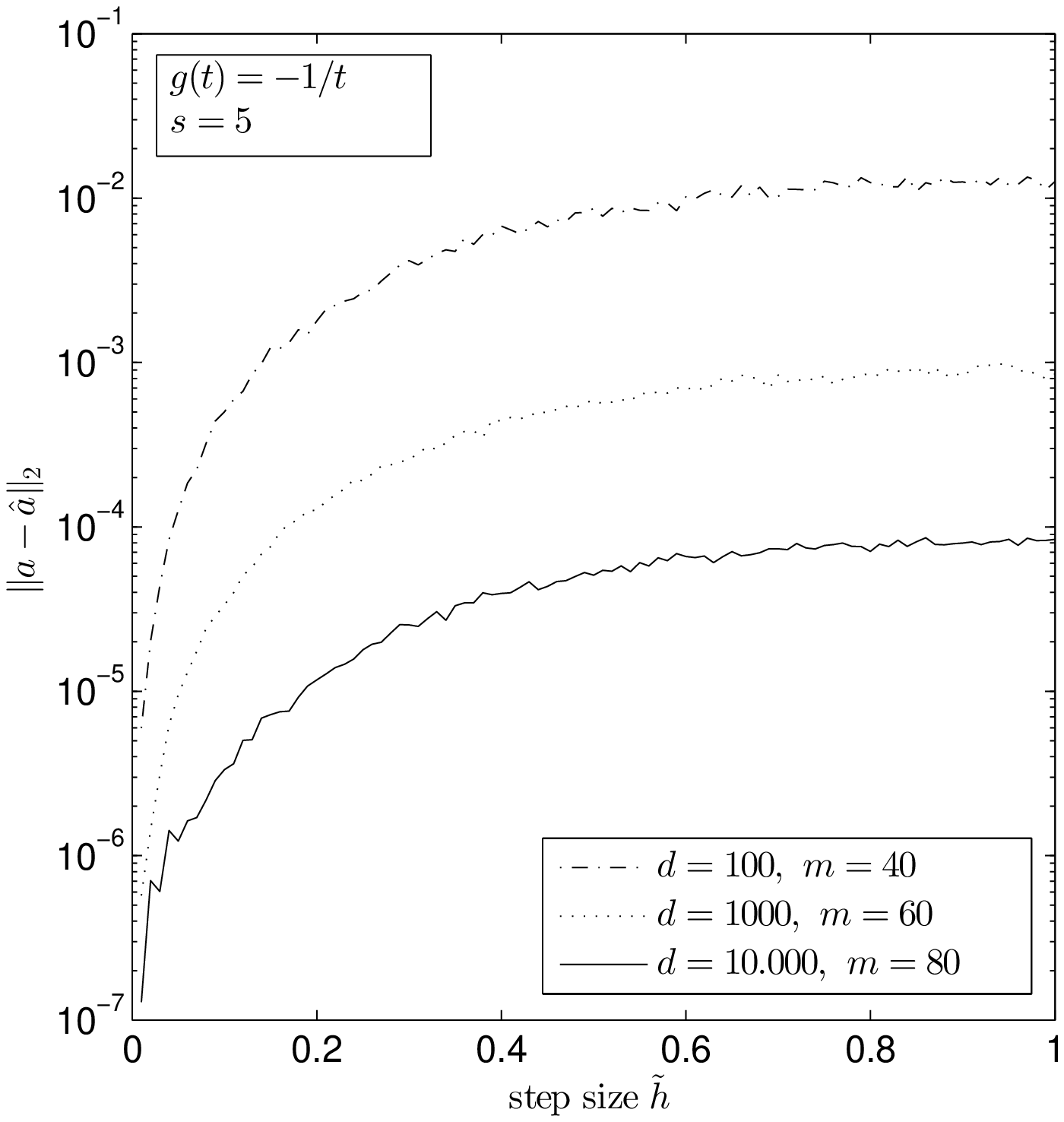}\quad
\includegraphics[width=6.1cm, height=6cm]{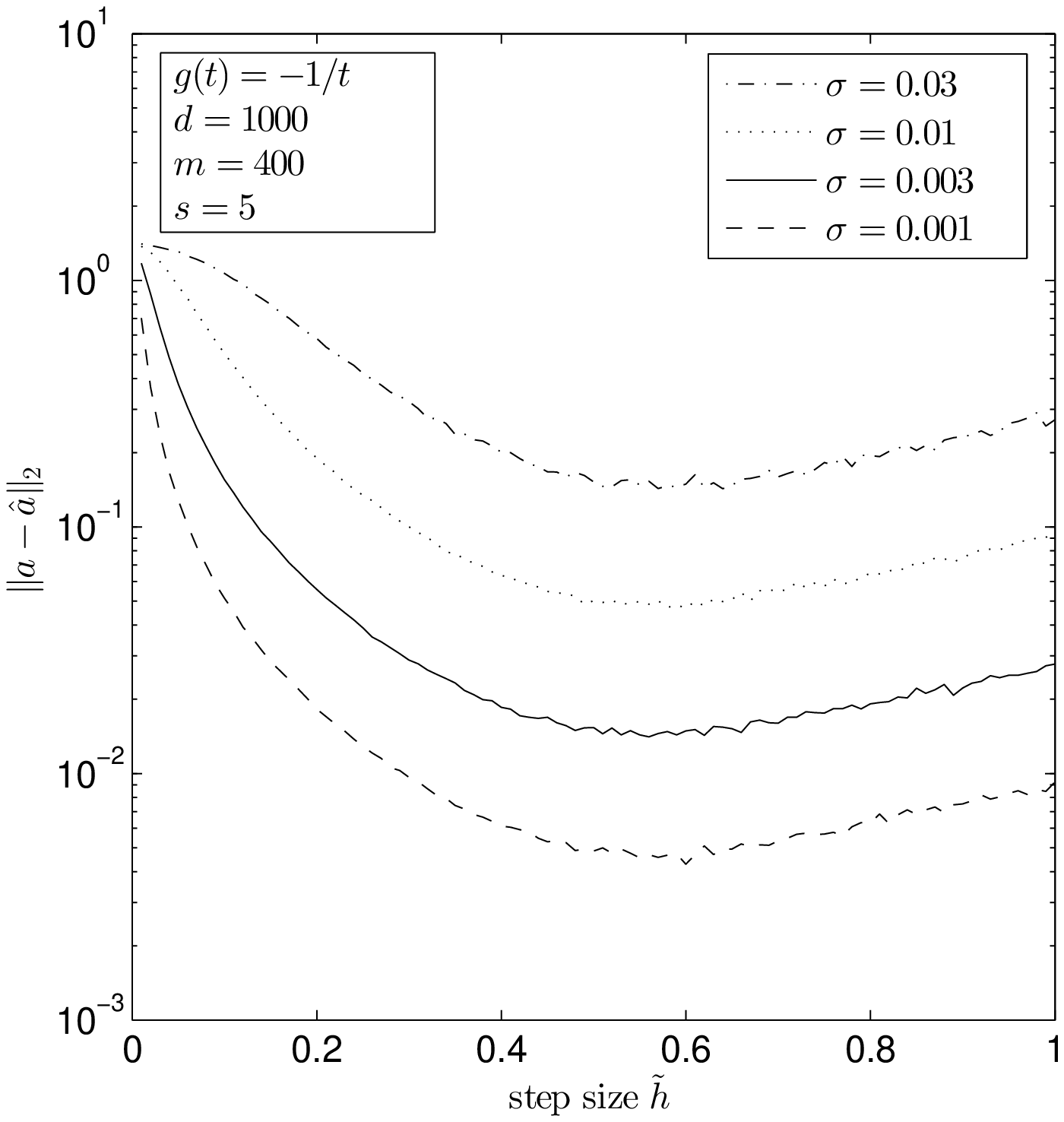}
\caption{Approximation of $a$ according to algorithm D with sparsity (left) and with noisy measurements (right).}
\label{fig:approx-D}
\end{figure}

In Figure \ref{fig:approx-D} we considered the approximation of the pole $a$ of a shifted radial function $f$ with $f(x)=g(\|a-x\|_{l_2^d}^2)$ and $g(t)=-1/t$.
On the left plot, we fixed the sparsity $s=5$ and considered three values of $d=100, d=1000$ and $d=10.000$. The number of measurements was then $m=40, m=60$, or $m=80$, respectively.
Finally, we run the modification of Algorithm D described in Remark \ref{rem:thm:sing} and plot the average approximation error $\|a-\hat a\|_{\ell_2^d}$ against the step size $h$.
The right hand plot of Figure \ref{fig:approx-D} shows the noise-aware modification of Algorithm D described also in Remark \ref{rem:thm:sing}.
%
%

\end{document}